\colorlet{RefColor}{green!50!black}
\colorlet{LinkColor}{red!50!black}
\definecolor{myred}{RGB}{230,75,53}
\definecolor{myblue}{RGB}{77,187,213}
\definecolor{mygreen}{RGB}{0,160,135}
\definecolor{mydarkblue}{RGB}{74, 174, 199}
\numberwithin{equation}{section}
\title{Nonlinear model reduction with Neural Galerkin schemes on quadratic manifolds\thanks{The authors were partially supported by the National Science Foundation under Grant No.~2046521. Furthermore, the work is funded by the U.S.~Department of Energy, Office of Science Energy Earthshot Initiative as part of the project "Learning reduced models under extreme data conditions for design and rapid decision-making in complex systems" under Award \#DE-SC0024721.}}
\author{{Philipp Weder}\thanks{Chair for Numerical Modelling and Simulation, École Polytechnique Fédérale de Lausanne}\and{Paul Schwerdtner}\thanks{Courant Institute of Mathematical Sciences, New York University (corresponding author \url{paul.schwerdtner@nyu.edu}).}
\and {Benjamin Peherstorfer}\footnotemark[3]}
\date{December 2024}
\begin{document}

\maketitle

\begin{abstract}
Leveraging nonlinear parametrizations for model reduction can overcome the Kolmogorov barrier that affects transport-dominated problems. 
In this work, we build on the reduced dynamics given by Neural Galerkin schemes and propose to parametrize the corresponding reduced solutions on quadratic manifolds. 
We show that the solutions of the proposed quadratic-manifold Neural Galerkin reduced models are locally unique and minimize the residual norm over time, which promotes stability and accuracy. 
For linear problems, quadratic-manifold Neural Galerkin reduced models achieve online efficiency in the sense that the costs of predictions scale independently of the state dimension of the underlying full model. For nonlinear problems, we show that Neural Galerkin schemes allow using separate collocation points for evaluating the residual function from the full-model grid points, which can be seen as a form of hyper-reduction. 
Numerical experiments with advecting waves and densities of charged particles in an electric field show that quadratic-manifold Neural Galerkin reduced models lead to orders of magnitude speedups compared to full models. 
\end{abstract}

\textbf{Keywords:}
   nonlinear model reduction, Neural Galerkin schemes, quadratic manifolds, Kolmogorov barrier

\section{Introduction}
Leveraging nonlinear parametrizations for model reduction \cite{RozzaPateraSurvey,SIREVSurvey,interpbook} enables reducing transport-dominated and other problems that are affected by the Kolmogorov barrier \cite{Peherstorfer2022Breaking}. 
In this work, we build on Neural Galerkin schemes \cite{BRUNA2024112588,BERMAN2024389,zhang2024} that use nonlinear parametrizations to approximate solution fields. 
We show that Neural Galerkin schemes can be used together with nonlinear parametrizations corresponding to quadratic-manifold approximations \cite{JainTRR2017quadratic,GeelenWW2023Operator,BarnettF2022Quadratic}. Quadratic approximations have rich structure that can be leveraged by Neural Galerkin schemes: 
We show that the proposed quadratic-manifold Neural Galerkin (QMNG) reduced models have locally unique solutions under standard assumptions on the underlying full models, and so avoid the tangent-space collapse phenomenon that other nonlinear parametrization can suffer from \cite{zhang2024}. Furthermore, the application of Neural Galerkin schemes guarantees that QMNG reduced solutions minimize the residual norm, which leads to stabler behavior than other ways of defining reduced dynamics on quadratic manifolds as our experiments show. 
Applying Neural Galerkin schemes to quadratic manifolds also separates collocation points at which the residual norm is evaluated from the grid points of the underlying full model. This allows for choosing collocation points that are different from the grid points of the full model, which is a form of hyper-reduction for nonlinear problems. Thus, adding another level of approximation with  empirical interpolation \cite{BarraultMNP2004Empirical} and related methods can be avoided in QMNG reduced models. 
Experiments with a range of numerical examples demonstrate that QMNG reduced models can achieve orders of magnitude speedups over full models of advecting waves and other transport-dominated phenomena. In particular, if the full model is linear, then QMNG reduced models leverage the specific structure of quadratic manifolds to achieve online efficiency in the sense that the costs of predictions with QMNG reduced models scale independently of the costs of the full model once the QMNG reduced model has been constructed.

We refer to \cite{Peherstorfer2022Breaking} for a brief survey on nonlinear model reduction and review only the works that are closest to our approach of using Neural Galerkin schemes with quadratic manifolds. 
Neural Galerkin schemes have been developed for numerically solving partial differential equations with neural-network parametrizations \cite{BRUNA2024112588}. Neural Galerkin schemes build on the Dirac-Frenkel variational principle \cite{dirac1930note,frenkel1934wave,lubich2008quantum}. The dynamics given by the Dirac-Frenkel variational principle are also used for dynamic low-rank approximations \cite{doi:10.1137/050639703} and in the works \cite{UngerTransformModes2020,Du_2021,doi:10.1137/21M1415972,kast2023positional,schulze2023structurepreservingmodelreductionporthamiltonian}; we also refer to the survey \cite{BERMAN2024389} and the works \cite{lasser_lubich_2020,HesthavenPR2022Reduced} for additional references. Similar concepts of finding reduced dynamics are also explored in \cite{LeeC2020Model,KIM2022110841,doi:10.2514/1.J059785,cocola2023hyperreducedautoencodersefficientaccurate,Romor2023} where neural network-based autoencoders are used for obtaining nonlinear parametrizations, which typically have to be equipped with hyper-reduction for achieving runtime speedups. 
Neural Galerkin schemes have been used with pre-trained nonlinear parametrization such as neural networks with continuous low-rank adaptation layers \cite{berman2024colora}. There is also the work \cite{NEURIPS2023_0cb310ed} that shows that only sparse subsets of the set of all weights of neural networks need to be updated during time integration in Neural Galerkin schemes, which can be seen as a form of training-free model reduction. 

We propose to use quadratic approximations with Neural Galerkin schemes in the following: Quadratic manifolds have been first used in model reduction in \cite{JainTRR2017quadratic,RutzmoserRTJ2017Generalization} and since then have led to a series of works that address intrusive and non-intrusive reduced modeling with quadratic manifolds as well as constructing quadratic manifolds from training data \cite{GeelenWW2023Operator,BarnettF2022Quadratic,GeelenBW2023Learning,GeelenBWW2023Learning,SHARMA2023116402,CRMECA_2023__351_S1_357_0,BennerGHP-D2023quadratic,SchwerdtnerP2024Greedy,schwerdtner2024onlinelearningquadraticmanifolds,GoyalB2024Generalized,Ballout2024,schwerdtner24qmsr}. 
The work \cite{BarnettF2022Quadratic} learns reduced models on quadratic manifolds from data in a non-intrusive way, i.e., without requiring intrusive access to the underlying full-model solver that generates the training data. However, for easing the fitting of the reduced model to data, the reduced dynamics are defined by setting the residual orthogonal to a constant-in-time test space. While this choice is convenient for the non-intrusive setting that is considered in \cite{GeelenWW2023Operator}, we will show that in the intrusive setting, when the full model is available, setting the residual orthogonal to the column space of the Jacobian of the parametrization as in Neural Galerkin schemes can lead to more stable predictions. The stabler behavior can be especially well seen on quadratic manifolds that are tightly fit to the training data and can be explained by observing that setting the residual orthogonal to the column space of the Jacobian leads to the minimization of the residual norm. We stress, however, that using a constant-in-time test space can lead to lower runtimes, as we show in our numerical experiments. Furthermore, it remains an open question how to use Neural Galerkin schemes in non-intrusive settings. 
The work \cite{SHARMA2023116402} derives intrusive reduced models on quadratic manifolds but it focuses on Hamiltonian systems and the conservation of Hamiltonians over time. Nevertheless, it is also proposed to use a test space that varies with time against which the residual is set to orthogonal.  
However, the trial and test spaces used in \cite{SHARMA2023116402} are motivated by preserving Hamiltonians and thus have specific structure corresponding to Hamiltonian systems. In particular, position and momentum have to be coupled, which leads to online calculations with cost complexities that increase quicker with the reduced dimension than in our approach. 
The work \cite{JainTRR2017quadratic} also uses a time-varying test space, even though the focus of that work is on second-order differential equations and the quadratic manifolds are trained specifically for these systems. Furthermore, we will show for quadratic-manifold Neural Galerkin reduced models that the solutions minimize the residual norm and are locally unique.   
The works \cite{BarnettF2022Quadratic,BarnettFM2023Neural-network-augmented} consider quadratic manifolds and more generic manifolds with data-driven feature maps. The reduced dynamics are obtained in the fully discrete setting by minimizing the time-discrete residual over time. This is related to Neural Galerkin schemes in the sense that Neural Galerkin schemes minimize the time-continuous residual. We refer to \cite{zhang2024} for an in-depth discussion about minimizing the time-continuous versus the time-discrete residual when training nonlinear parametrizations in the context of time-dependent partial differential equations.

The manuscript is organized as follows. We discuss Neural Galerkin schemes and quadratic manifolds in the preliminaries in Section~\ref{sec:Prelim}. We then introduce QMNG reduced models based on quadratic parametrizations and interpolation in Section~\ref{sec:QMNG}. In Section~\ref{sec:QMNGLinear} we derive QMNG reduced models in the special case that the collocation points of Neural Galerkin schemes and the grid points of the full model coincide, which allows pre-computing quantities and achieving online efficiency in the sense that prediction costs for linear problems scale independently of the number of degrees of freedom of the full model. Numerical experiments are presented in Section~\ref{sec:NumExp} and Conclusions are drawn in Section~\ref{sec:Conc}.

\section{Preliminaries}\label{sec:Prelim}
We briefly describe the setup that we consider and then discuss Neural Galerkin schemes and quadratic manifolds.

\subsection{Setup}\label{sec:Prelim:Setup}
Consider a time-dependent partial differential equation (PDE)
\begin{equation}\label{eq:Prelim:PDE}
\begin{aligned}
\partial_t q(\bfx, t; \bfmu) &= f(\bfx, q; \bfmu)\,,\\
q(\bfx, 0; \bfmu) & = q_0(\bfx; \bfmu)\,,
\end{aligned}
\end{equation}
with solution field $q: \Omega \times [0, T] \times \Dcal \to \mathbb{R}$ that evolves over time $t \in [0, T]$, spatial coordinate $\bfx \in \Omega \subset \mathbb{R}^d$, and physics parameter $\bfmu \in \Dcal \subset \mathbb{R}^{d^{\prime}}$. The dynamics are given by the right-hand side function $f$. Evaluating the function $f$ can require taking partial derivatives of $q$ with respect to the spatial coordinate $\bfx$. The initial condition is $q_0: \Omega \times \Dcal \to \mathbb{R}$. Note that the solution field $q$ is scalar-valued for ease of exposition. All of the following can be extended to vector-valued solution fields.

\subsection{Neural Galerkin schemes}\label{sec:Prelim:NG}
For applying Neural Galerkin schemes to numerically solve \eqref{eq:Prelim:PDE}, we consider a nonlinear parametrization $\tilde{q}: \Theta \times \Omega \to \mathbb{R}$ that depends on a weight vector $\bftheta(t; \bfmu) \in \Theta \subseteq \mathbb{R}^\np$, which depends on time $t$ and the physics parameter $\bfmu$. The parametrization is nonlinear in the sense that the weights $\bftheta(t; \bfmu)$ can enter nonlinearly into the function $\tilde{q}$, which is in stark contrast to a wide range of numerical methods in numerical analysis that rely on the weights to enter linearly; see, e.g., \cite{zhang2024} for a discussion. The function $\tilde{q}$ is a time-dependent parametrization of $q$ because the weight $\bftheta(t; \bfmu)$ is time-dependent.
The nonlinear parametrization $\tilde{q}$ can be generic, such as a neural network with time-dependent and parameter-dependent weights and biases as in \cite{Du_2021,BRUNA2024112588,kast2023positional,berman2024colora}.
Plugging $\tilde{q}$ into the PDE \eqref{eq:Prelim:PDE} leads to the residual function $r: \Theta \times \Theta \times \Omega \times \Dcal \to \mathbb{R}$, 
\begin{equation}\label{eq:Prelim:ResFun}
r(\bftheta(t; \bfmu), \dot{\bftheta}(t; \bfmu), \bfx; \bfmu) = \nabla_{\bftheta} \tilde{q}(\bftheta(t; \bfmu), \bfx) \cdot \dot{\bftheta}(t; \bfmu) - f(\bfx, \tilde{q}(\bftheta(t; \bfmu), \cdot); \bfmu)\,,
\end{equation}
where $\dot{\bftheta}(t; \bfmu)$ denotes the time derivative of $\bftheta(t; \bfmu)$ that is obtained by applying the chain rule to $\partial_t\tilde{q}(\bftheta(t; \bfmu), \cdot) = \nabla_{\bftheta}\tilde{q}(\bftheta(t; \bfmu), \cdot) \cdot \dot{\bftheta}(t; \bfmu)$. 

In Neural Galerkin schemes, the time derivative $\dot{\bftheta}(t; \bfmu)$ is determined via the Dirac-Frenkel variational principle \cite{dirac1930note,frenkel1934wave,lubich2008quantum,doi:10.1137/050639703} by setting the residual \eqref{eq:Prelim:ResFun} orthogonal to the test space spanned by the components of the gradient $\nabla_{\bftheta}\tilde{q}(\bftheta(t; \bfmu), \cdot)$,
\begin{equation}\label{eq:Prelim:ResEquation}
\langle \partial_{\theta_i}\tilde{q}(\bftheta(t; \bfmu), \cdot), r(\bftheta(t; \bfmu), \dot{\bftheta}(t; \bfmu), \cdot; \bfmu)\rangle = 0\,,\qquad i = 1, \dots, n\,,
\end{equation}
where the inner product $\langle \cdot, \cdot \rangle$ typically is the $L^2$ inner product over $\Omega$; see \cite{BRUNA2024112588,BERMAN2024389} for details.
Transforming the system of equations \eqref{eq:Prelim:ResEquation} reveals that they are linear in the unknown $\dot{\bftheta}(t; \bfmu)$ and that they are the normal equations corresponding to the least-squares problem
\begin{equation}\label{eq:Prelim:NGLSQ}
\min_{\dot{\bftheta}(t; \bfmu) \in \Theta} \|\nabla_{\bftheta}\tilde{q}(\bftheta(t; \bfmu), \cdot) \cdot \dot{\bftheta}(t; \bfmu) - f(\cdot, \tilde{q}(\bftheta(t; \bfmu), \cdot); \bfmu)\|^2\,,
\end{equation}
where the norm $\|\cdot\|$ is induced by the inner product $\langle \cdot, \cdot \rangle$. 
Solving the least-squares problem \eqref{eq:Prelim:NGLSQ} over time $t \in [0, T]$ for a given parameter $\bfmu \in \Dcal$ provides a weight trajectory $t \mapsto \bftheta(t; \bfmu)$ so that the corresponding function $\tilde{q}(\bftheta(t; \bfmu), \cdot)$ numerically solves the PDE \eqref{eq:Prelim:PDE} in the sense of having an orthogonal residual given by the conditions given in  \eqref{eq:Prelim:ResEquation}.  
We refer to \cite{lubich2008quantum,doi:10.1137/050639703} for discussions about the Dirac-Frenkel variational principle and to \cite{BERMAN2024389,zhang2024} for Neural Galerkin schemes and applications to neural-network parametrizations.

\subsection{Nonlinear parametrizations corresponding to quadratic manifolds}\label{sec:Prelim:QM}
Dimensionality reduction is typically described via an encoder function $e: \mathbb{R}^{N} \to \mathbb{R}^{n}$ that maps a data point $\bfs \in \mathbb{R}^{N}$ in the high-dimensional space $\mathbb{R}^N$ onto a low-dimensional data point $\bhats \in \mathbb{R}^n$ in $\mathbb{R}^n$ with lower dimension $n \ll N$, and a decoder function $g: \mathbb{R}^n \to \mathbb{R}^N$ that lifts a low-dimensional data point $\bhats$ in $\mathbb{R}^n$ back onto a point in the high-dimensional space $\mathbb{R}^N$.
Dimensionality reduction on quadratic manifolds \cite{RutzmoserRTJ2017Generalization,JainTRR2017quadratic,BarnettF2022Quadratic,GeelenBW2023Learning,GeelenWW2023Operator,SchwerdtnerP2024Greedy} uses an affine encoder function 
\[
e_{\bfV}(\bfs) = \bfV^{\top}(\bfs - \bfs_0),
\]
where $\bfs_0 \in \mathbb{R}^N$ is a reference point and $\bfV \in \mathbb{R}^{N \times n}$ is a basis matrix with orthonormal columns, and a quadratic decoder function 
\begin{equation}\label{eq:Prelim:DecoderQuad}
g_{\bfV,\bfW}(\bhats) = \boldsymbol{s}_0 + \bfV\bhats + \bfW h(\bhats)\,.
\end{equation}
The decoder function is quadratic because the feature map $h: \mathbb{R}^{n} \to \mathbb{R}^{n^2}$ is a quadratic function,
\[
h: \mathbb{R}^{n} \to \mathbb{R}^{n^2}, \quad \bhats \mapsto \bhats \otimes \bhats \coloneqq \begin{bmatrix} \hat{s}_1\hat{s}_1 & \hat{s}_1\hat{s}_2 & \dots & \hat{s}_1\hat{s}_n & \hat{s}_2\hat{s}_2 \dots \hat{s}_n\hat{s}_n\end{bmatrix}^{\top}\,,
\]
where $\bhats = [\hat{s}_1, \dots, \hat{s}_n]^{\top} \in \mathbb{R}^n$. We point out that one could remove the duplicate entries from $h(\bhats)$ to obtain a quadratic feature map with reduced output dimension $n(n+1)/2$ as it is done in \cite{GeelenBWW2023Learning}. For ease of exposition, we adhere to $h$, but the following arguments also apply to its reduced variant.
The decoder function $g_{\bfV,\bfW}$ depends on the same reference point $\bfs_0$ and basis matrix $\bfV$ as the encoder function $e_{\bfV}$ and additionally on the matrix $\bfW \in \mathbb{R}^{N \times n^2}$.  
While we focus on decoder functions with quadratic feature maps, all of the following methodology can be generalized to other, more general, feature maps \cite{BarnettFM2023Neural-network-augmented,CRMECA_2023__351_S1_357_0}. Applying the decoder function defines a subset in $\mathbb{R}^N$,
\[
\mathcal{M}_{n} = \{g_{\bfV, \bfW}(\bhats) \,|\, \bhats \in \mathbb{R}^n\}\subset \mathbb{R}^N\,,
\]
to which we refer as quadratic manifold in $\mathbb{R}^N$ because the decoder is quadratic in its argument. We stress that $\mathcal{M}_n$ does not necessarily have a manifold structure but we still use the term manifold to be in agreement with the terminology in the literature \cite{BarnettF2022Quadratic,GeelenBW2023Learning,GeelenWW2023Operator}. 

Given training data points $\bfs_1, \dots, \bfs_M \in \mathbb{R}^N$, the matrices $\bfV$ and $\bfW$ that define a quadratic manifold can be trained, e.g., via the greedy approach introduced in \cite{SchwerdtnerP2024Greedy}.
The greedy method introduced in \cite{SchwerdtnerP2024Greedy} provides an orthonormal basis matrix $\bfV$ by taking into account leading and later left-singular vectors of the data matrix $\bfS = [\bfs_1, \dots, \bfs_M] \in \mathbb{R}^{N \times M}$. Once the matrix $\bfV$ has been obtained by the greedy method, the matrix $\bfW$ is fitted via least-squares regression as
\begin{equation}\label{eq:Prelim:FitW}
\bfW = \operatorname*{arg\,min}_{\bfW \in \mathbb{R}^{N \times n^2}} \|\bfV\bfV^{\top}\bfS - \bfS + \bfW h(e_{\bfV}(\bfS))\|_F^2 + \gamma \|\bfW\|_F^2\,,
\end{equation}
where we overload the notation of the encoder function $e_{\bfV}$ to operate column-wise on the matrix $\bfS$. 
The factor $\gamma > 0$ is a regularization parameter that can help prevent the overfitting of $\bfW$ to the training data. In~\cite[Section 2.4]{GeelenWW2023Operator} it is noted that fitting $\bfW$ with \eqref{eq:Prelim:FitW} guarantees $\bfW^\top \bfV=0$, which we will use later.

\section{Nonlinear model reduction with Neural Galerkin schemes on quadratic manifolds}\label{sec:QMNG}
We propose to leverage approximations on quadratic manifolds together with Neural Galerkin schemes to derive reduced models. 
Combining quadratic manifolds and Neural Galerkin schemes leads to reduced models that minimize the residual at each point in time to obtain a Galerkin-optimal solution on the quadratic manifold. At the same time, the quadratic approximations have a rich structure that is inherited by the systems that need to be solved in each time step of Neural Galerkin schemes. We also relate QMNG reduced models to other reduced models in the literature \cite{GeelenWW2023Operator,SHARMA2023116402} and show connections with respect to the test space against which the residual is set orthogonal. %

\subsection{Training quadratic manifolds on snapshot data}\label{sec:QMNG:Snapshots}
We train the quadratic manifold on snapshot data obtained from numerical approximations of the solution fields $q$ of \eqref{eq:Prelim:PDE}, which are derived from a semi-discrete problem that we refer to as the full model, 
\begin{equation}\label{eq:Prelim:SemiDiscPDE}
\begin{aligned}
\partial_t \bfq(t; \bfmu) &= \bff(\bfq(t; \bfmu); \bfmu)\,,\\
\bfq(0; \bfmu) & = \bfq_0(\bfmu)\,.
\end{aligned}
\end{equation}
The state $\bfq: [0, T] \times \Dcal \to \mathbb{R}^N, \bfq(t; \bfmu) = [q_1(t; \bfmu), \dots, q_N(t; \bfmu)]^T$ is a vector-valued function with $N \in \mathbb{N}$ component functions $q_1, \dots, q_N$. The $N$ component functions correspond to approximations of the solution field $q$ of \eqref{eq:Prelim:PDE} evaluated at the corresponding grid points in the set \begin{equation}\label{eq:QMNG:SpatialCoordinates}
\mathcal{X} = \{\bfx_1, \dots, \bfx_N\} \subset \Omega\,.
\end{equation}
The right-hand side function $\bff: \mathbb{R}^{N} \times \Dcal \to \mathbb{R}^{N}$ and initial condition $\bfq_0: \Dcal \to \mathbb{R}^N$ are analogously defined.

Let now $\bfmu_1, \dots, \bfmu_{M'} \in \mathcal{D}$ be training parameters and let $t_1, \dots, t_K \in [0, T]$ be time points at which numerical solutions to the semi-discrete PDE problem \eqref{eq:Prelim:SemiDiscPDE} are available as offline data in a snapshot matrix,
\begin{equation}\label{eq:QMNG:SnapshotQ}
\bfQ = \begin{bmatrix}
| & & | & | & & |\\
\bfq(t_1; \bfmu_1) & \dots & \bfq(t_K; \bfmu_1) & \bfq(t_1; \bfmu_2) & \dots & \bfq(t_K; \bfmu_{M'})\\
| & & | & | & & |
\end{bmatrix} \in \mathbb{R}^{N \times KM'}\,.
\end{equation}
The encoder function $e_{\bfV}$ and decoder function $g_{\bfV,\bfW}$ of a quadratic manifold as described in Section~\ref{sec:Prelim:QM} are then trained on the snapshot data $\bfQ$.
In particular, we obtain $\bfV$ and $\bfW$ with the greedy method introduced in~\cite{SchwerdtnerP2024Greedy}.

\subsection{Interpolated quadratic parametrizations}
Let $e_{\bfV}: \mathbb{R}^N \to \mathbb{R}^n$ and $g_{\bfV,\bfW}: \mathbb{R}^n \to \mathbb{R}^N$ be encoder and decoder functions corresponding to a quadratic manifold of dimension $n$ that are trained on snapshot data $\bfQ$.
We introduce an interpolation operator $I: \Omega \times \mathbb{R}^{N \times d} \times \mathbb{R}^{N \times l} \to \mathbb{R}^l$, which takes a coordinate $\bfx \in \Omega$ in the spatial domain and the $N$ grid points $\bfx_1, \dots, \bfx_N$ given in \eqref{eq:QMNG:SpatialCoordinates} in matrix form $\bfX = [\bfx_1, \dots, \bfx_N]^{\top} \in \mathbb{R}^{N \times d}$. 
The third argument is a matrix 
\[
\bfU = \begin{bmatrix} u_1(\bfx_1) & \dots & u_l(\bfx_1)\\ 
\vdots & & \vdots\\
u_1(\bfx_N) & \dots & u_l(\bfx_N)\end{bmatrix} \in \mathbb{R}^{N \times l}
\]
of $N$ function evaluations at the $N$ grids points of $l$ functions $u_1, \dots, u_l: \Omega \to \mathbb{R}$. The output of $I$ is a vector of approximations of the values of the $l$ functions at the coordinate $\bfx$ in the spatial domain $\Omega$,
\[
I(\bfx, \bfX, \bfU) \approx \begin{bmatrix} 
u_1(\bfx)\\
\vdots\\
u_l(\bfx)
\end{bmatrix} \in \mathbb{R}^l\,.
\]
Given the matrices $\bfV$ and $\bfW$ corresponding to the encoder $e_{\bfV}$ and the decoder $g_{\bfV,\bfW}$ functions, we can interpret the columns of $\bfV$ and $\bfW$ as corresponding to function values, which we interpolate with $I$. 
Correspondingly, the decoder $g_{\bfV,\bfW}$ induces a quadratic parametrization
\begin{equation}\label{eq:QMNG:ContEncoder}
g_{\bfV,\bfW,I}: \Theta \times \Omega \to \mathbb{R}\,,\qquad  g_{\bfV,\bfW,I}(\bftheta, \bfx)=I(\bfx, \bfX, \bfs_0) + I(\bfx, \bfX, \bfV)^{\top}\bftheta + I(\bfx, \bfX, \bfW)^{\top}h(\bftheta)\,,
\end{equation}
with the $\bfs_0, \bfV,$ and $\bfW$ of the decoder function $g_{\bfV, \bfW}$ defined in \eqref{eq:Prelim:DecoderQuad} in Section~\ref{sec:Prelim:QM} and obtained with the training procedure discussed in Section~\ref{sec:QMNG:Snapshots}. The notation of $I$ is overloaded in \eqref{eq:QMNG:ContEncoder} because the matrices $\bfs_0, \bfV, \bfW$ have different numbers of columns. 
Importantly, the parametrization $g_{\bfV,\bfW,I}$ can be evaluated at any point in the spatial domain $\Omega$, whereas the decoder function $g_{\bfV, \bfW}$ given in \eqref{eq:Prelim:DecoderQuad} provides a decoded state corresponding to the grid points $\bfx_1, \dots, \bfx_N$ of the full model only. 

\subsection{Applying Neural Galerkin schemes to interpolated quadratic parametrizations}
The parametrization $g_{\bfV,\bfW,I}$ has the same signature as nonlinear parametrization $\tilde{q}$ used in Section~\ref{sec:Prelim:NG} and thus Neural Galerkin schemes are directly applicable; note that we will discuss an extension of Neural Galerkin schemes that are applicable directly to $g_{\bfV,\bfW}$ in Section~\ref{sec:QMNGLinear}.  
By applying Neural Galerkin to $g_{\bfV,\bfW,I}$, we seek $\dot{\bftheta}(t)$ such that the residual \eqref{eq:Prelim:ResFun} with $g_{\bfV,\bfW,I}$ is orthogonal in terms of \eqref{eq:Prelim:ResEquation}, which leads to the least-squares problem \eqref{eq:Prelim:NGLSQ}. 
The objective function of the least-squares problem \eqref{eq:Prelim:NGLSQ} is formulated via the $L^2$ norm, which needs to be numerically approximated. 
Following standard convention \cite{BRUNA2024112588}, we approximate the $L^2$ norm in the objective by choosing a set $\Xi = \{\bfxi_1, \dots, \bfxi_m\} \subset \Omega$ of $m$ collocation points $\bfxi_1, \dots, \bfxi_m$ and solving
\begin{equation}\label{eq:QMNG:NGXi}
\min_{\dot{\bftheta}(t) \in \Theta} \| \bfJ_{\Xi}(\bftheta(t))\dot{\bftheta}(t) - \bff_{\Xi}(\bftheta(t); \bfmu)\|_2^2
\end{equation}
with the batch Jacobian matrix
\begin{equation}\label{eq:QMNG:JacobianXi}
\bfJ_{\Xi}(\bftheta(t; \bfmu)) = \begin{bmatrix}
\frac{\partial}{\partial \theta_1} g_{\bfV,\bfW,I}(\bfxi_1, \bftheta(t; \bfmu)) & \dots & \frac{\partial}{\partial \theta_n}g_{\bfV, \bfW, I}(\bfxi_1, \bftheta(t; \bfmu))\\
\vdots & & \vdots\\
\frac{\partial}{\partial \theta_1}g_{\bfV,\bfW,I}(\bfxi_m, \bftheta(t; \bfmu)) & \dots & \frac{\partial}{\partial \theta_n}g_{\bfV, \bfW, I}(\bfxi_m, \bftheta(t; \bfmu))
\end{bmatrix} \in \mathbb{R}^{m \times n}
\end{equation}
and the batch right-hand side
\begin{equation}\label{eq:QMNG:BatchRHS}
\bff_{\Xi}(\bftheta(t; \bfmu); \bfmu) = \begin{bmatrix} f(\bfxi_1, g_{\bfV, \bfW, I}(\bftheta(t; \bfmu), \cdot); \bfmu) & \cdots & f(\bfxi_m, g_{\bfV, \bfW, I}(\bftheta(t; \bfmu), \cdot); \bfmu)\end{bmatrix}^{\top} \in \mathbb{R}^m\,.
\end{equation}
The choice of the set of collocation points has to be such that the objective of \eqref{eq:QMNG:NGXi} in the Euclidean norm $\|\cdot\|_2$ approximates well the objective of \eqref{eq:Prelim:NGLSQ} in the $L^2$ norm. We consider problems with the spatial domains $\Omega$ of low dimension so that equidistant grid points and uniform sampling in $\Omega$ is sufficient. However, in higher dimensions, selecting the collocation points is more delicate; we refer to  \cite{BRUNA2024112588,WEN2024134129} for extensive discussions about this.

We stress that it is an important feature of our approach that the grid points $\mathcal{X}$ corresponding to the full model and the collocation points $\Xi$ in objective function in the least-squares problem \eqref{eq:QMNG:NGXi} can be different. In particular, the number of collocation points $m$ can be chosen smaller than the number of grid points $N$, which means that our approach has a form of hyper-reduction baked in. Thus, no additional reduction and approximation via empirical interpolation \cite{BarraultMNP2004Empirical} and related methods is necessary. 

The Neural Galerkin conditions encoded in the system \eqref{eq:QMNG:NGXi} provide equations for integrating in time the weight vector $\bftheta(t; \bfmu)$.
We refer to system \eqref{eq:QMNG:NGXi} induced by the Neural Galerkin conditions as the QMNG reduced model, and sometimes as the QMNG reduced model with interpolation to distinguish it from the version that we will introduce in Section~\ref{sec:QMNGLinear} that avoids interpolation.
QMNG reduced models can be discretized in time with a time integration scheme, such as explicit Runge-Kutta schemes.

\subsection{Affine structure of Jacobian matrix of quadratic decoder functions}\label{sec:QMNG:LinAffJ}
The Jacobian matrix $\bfJ_{\Xi}(\bftheta(t; \bfmu))$ defined in \eqref{eq:QMNG:JacobianXi} plays a key role in Neural Galerkin schemes because its columns span the space against which the residual is set orthogonal in the least-squares problem  \eqref{eq:QMNG:NGXi}.
In contrast to generic parametrizations given by neural networks, the Jacobian matrix corresponding to quadratic manifolds has a rich structure that we discuss now. 
Recall that $\bfJ_{\Xi}(\bftheta(t; \bfmu))$ is the Jacobian matrix of the nonlinear parametrization $g_{\bfV,\bfW,I}: \Theta \times \Omega \to \mathbb{R}$ evaluated at the collocation points $\bfxi_1, \dots, \bfxi_m \in \Omega$. 

Because $g_{\bfV,\bfW,I}$ is a quadratic function in the parameter $\bftheta(t; \bfmu)$, its Jacobian matrix is affine
\begin{equation}\label{eq:QMNG:AffineJ}
\bfJ_{\Xi}(\bftheta) = \bfV_{\Xi} + \bfK_{\Xi}(\bftheta)\,,
\end{equation}
where the matrix $\bfV_{\Xi}$ is obtained from $\bfV$ via interpolation as 
\begin{equation}\label{eq:QMNG:VInterp}
\bfV_{\Xi} = \begin{bmatrix}
\text{\,\,---\,\,}  I(\bfxi_1, \bfX, \bfV)^{\top}  \text{\,\,---\,\,}\\
\vdots\\
\text{\,\,---\,\,}  I(\bfxi_m, \bfX, \bfV)^{\top}  \text{\,\,---\,\,}
\end{bmatrix} \in \mathbb{R}^{m \times n}\,,
\end{equation}
The matrix-valued function $\bfK_{\Xi}: \mathbb{R}^n \to \mathbb{R}^{m \times n}$ is 
\begin{equation}\label{eq:QMNG:KMap}
\bfK_{\Xi}(\bftheta) = \bfW_{\Xi} \nabla_{\btheta}h(\btheta) = \bfW_{\Xi}(\bftheta \otimes \bfI + \bfI \otimes \bftheta) \in \mathbb{R}^{m \times n}\,,
\end{equation}
where $\bfW_{\Xi}$ is the interpolated weight matrix obtained from $\bfW$ in an analogous way as \eqref{eq:QMNG:VInterp} is obtained from the basis matrix $\bfV$ and $\bfI$ is the identity matrix of appropriate dimension. 
In tensor format, the function $\bfK_{\Xi}$ is linear in $\bftheta$. To see this, we introduce the following notation for the n-mode product \cite[Section 2.5]{Kolda2009Tensor}. For a three-way tensor $\Tcal  \in \R^{n \times m \times p}$ and a vector $\boldsymbol{v} \in \R^p$, we define the entries of the $n \times m$ matrix $\Tcal \cdot \boldsymbol{v} \in \mathbb{R}^{n \times m}$ as
\begin{equation}\label{eq:QMNG:TensorOne}
    [\Tcal \cdot \boldsymbol{v}]_{ij} = \mathcal{T}_{ijk} v_k\,,\qquad i = 1, \dots, n, \quad j = 1, \dots, m\,.
\end{equation}
The costs of computing \eqref{eq:QMNG:TensorOne} scale as $\mathcal{O}(nmp)$.
Analogously, we define $\bfv \cdot \Tcal \in \mathbb{R}^{n \times m \times p}$ for a vector $\bfv \in \mathbb{R}^{n}$  as   
\begin{equation}\label{eq:QMNG:TensorTwo}
[\boldsymbol{v} \cdot \mathcal{T}]_{jk} = v_i \mathcal{T}_{ijk}\,,\qquad j = 1, \dots, m, \quad k = 1, \dots, p.
\end{equation}
The operation $\cdot$ is linear in both of its arguments.  %

Using the tensor products defined in \eqref{eq:QMNG:TensorOne} and \eqref{eq:QMNG:TensorTwo}, the map $\bfK_{\Xi}$ defined in \eqref{eq:QMNG:KMap} can be written as 
\begin{equation}\label{eq:QMNG:KMapLinTensor}
 \bfK_{\Xi}(\bftheta) = \mathcal{K}_{\Xi} \cdot \bftheta\,,
\end{equation}
with a three-way tensor $\mathcal{K}_{\Xi} \in \mathbb{R}^{m \times n \times n}$. Notice that the three-way tensor $\mathcal{K}_{\Xi}$ depends on the interpolated weight matrix $\bfW_{\Xi}$ obtained from $\bfW$ used in the decoder function $g_{\bfV, \bfW}$ and the collocation points in $\Xi$.
Using the tensor $\mathcal{K}_{\Xi}$ and that the tensor products in \eqref{eq:QMNG:TensorOne} and \eqref{eq:QMNG:TensorTwo} are linear operations, we write $\bfJ_{\Xi}(\bftheta)$ as an affine map in $\bftheta$, 
\begin{equation}\label{eq:QMNG:JTensor}
\bfJ_{\Xi}(\bftheta) = \bfV_{\Xi} + \mathcal{K}_{\Xi} \cdot \bftheta.
\end{equation}

\subsection{Neural Galerkin solutions on quadratic manifolds have minimal residual norm}\label{sec:QMNG:ResNormMin}
The QMNG reduced model sets the residual at $\bftheta(t; \bfmu)$ orthogonal to the space spanned by the columns of the Jacobian \eqref{eq:QMNG:AffineJ} at $\bftheta(t; \bfmu)$, which is equivalent to minimizing the residual in the Euclidean norm over the collocation points $\bfxi_1, \dots, \bfxi_m$ as in the least-squares problem \eqref{eq:QMNG:NGXi}. 
Thus, solutions of QMNG reduced models minimize the residual norm over time $t$ in the sense of \eqref{eq:QMNG:NGXi}. 

Building on the generality of Neural Galerkin schemes, we can use other spaces to set the residual orthogonal to. %
For example, because the Jacobian matrix $\bfJ_{\Xi}(\bftheta(t; \bfmu))$ is affine as shown in Section~\ref{sec:QMNG:LinAffJ}, one could consider a zeroth-order approximation 
\begin{equation}\label{eq:QMNG:ZerothOrderJ}
\bar{\bfJ}_{\Xi}(\bftheta(t; \bfmu)) = \bfV_{\Xi}
\end{equation}
of $\bfJ_{\Xi}(\bftheta(t; \bfmu))$, which is a constant. 
Setting the residual orthogonal to the constant space spanned by the columns of the zeroth-order approximation \eqref{eq:QMNG:ZerothOrderJ} leads to the least-squares problem
\begin{equation}\label{eq:QMNG:OrthoDynLSQ}
\min_{\dot{\bar{\bftheta}}(t; \bfmu) \in \mathbb{R}^n} \|\bfV_{\Xi}\dot{\bar{\bftheta}}(t; \bfmu) - \bff_{\Xi}(\bar{\bftheta}(t; \bfmu); \bfmu)\|_2^2\,,
\end{equation}
with the corresponding solutions
\begin{equation}\label{eq:QMNG:OrthoDyn}
\dot{\bar{\bftheta}}(t; \bfmu) = \bfV^+_{\Xi}\bff_{\Xi}(\bar{\bftheta}(t; \bfmu); \bfmu)\,,
\end{equation}
where $\bfV^+_{\Xi}$ is the Moore-Penrose pseudo-inverse of $\bfV_{\Xi}$. Notice that \eqref{eq:QMNG:OrthoDyn} provides the minimal norm solution of \eqref{eq:QMNG:OrthoDynLSQ} if $\bfV_{\Xi}$ is low rank.
Reduced models based on the dynamics \eqref{eq:QMNG:OrthoDyn} are first introduced in the work \cite{GeelenWW2023Operator} for intrusive and non-intrusive model reduction. Notice that using using $\bfJ_{\Xi}$ directly in non-intrusive model reduction is challenging. Reduced models using dynamics obtained with a time-dependent test space are introduced in \cite{SHARMA2023116402}; however, there the test spaces are more involved as they are constructed such that the corresponding model preserves Hamiltonians. We note that \cite{JainTRR2017quadratic} also use time-varying test spaces but focuses on second-order differential equations. %
The reduced dynamics \eqref{eq:QMNG:OrthoDyn} have the major advantage that the solution to the least-squares problem \eqref{eq:QMNG:OrthoDynLSQ} is computationally cheaper to compute because the zeroth-order approximation $\bar{\bfJ}_{\Xi} = \bfV^{+}_{\Xi}$ of $\bfJ_{\Xi}$ is independent of the weight $\bar{\bftheta}(t; \bfmu)$; however, the corresponding solution does not minimize the norm of the residual, which can lead to instabilities as we will show in the numerical experiments.

\subsection{Algorithms for offline and online phase of QMNG reduced models}%
Algorithm~\ref{alg:offline phase} summarizes the offline steps of QMNG reduced models. The inputs for the offline phase are the snapshot matrix $\bfQ$, the grid points $\mathcal{X}$ of the full model, the reduced dimension $n$, the regularization parameter $\gamma$, and the number of candidate singular vectors $l$ for the greedy construction of the quadratic manifold; for details on the last two parameters, we refer to \cite{SchwerdtnerP2024Greedy}. In a first step, the reference value $\bfs_0$ is computed by taking the mean over all snapshots. Next, the matrices $\bfV$ and $\bfW$ are computed using the greedy quadratic manifold algorithm in \cite{SchwerdtnerP2024Greedy}. Finally, $\bfs_0, \bfV$, and $\bfW$ are interpolated in space and function handles for $I(\cdot, \bfX, \bfV), I(\cdot, \bfX, \bfW)$, and the continuous decoder $g_{\bfV,\bfW,I}$ are returned.

Algorithm~\ref{alg:online phase} summarizes the online phase. We show the algorithm for explicit time integration with the forward Euler method but it generalizes to other time integration schemes. %
The inputs for the online phase are the parameter vector $\bfmu$ at which a solution is to be computed and the corresponding initial condition $\bfq_0(\bfmu)$. Further inputs are the time-steps size $\delta t$ and the number of time steps $K$, as well as the functions $I(\cdot, \bfX, \bfs_0), I(\cdot, \bfX, \bfV), I(\cdot, \bfX, \bfW)$, and $g_{\bfV, \bfW, I}$.
The weight trajectory is initialized by encoding the initial condition $\bfq_0(\bfmu)$ evaluated at the grid points of the full model on the quadratic manifold. 
For each time step, we start by sampling a new batch of collocation points $\bfxi_1, \ldots, \bfxi_m \in \Omega$. This is an optional step and does not have to be done at each time step. Then, we assemble the batch Jacobian matrix  $\bfJ_{\Xi}(\bftheta_k(\bfmu))$ and the batch right-hand side $\bff_{\Xi}(\bftheta_k(\bfmu); \bfmu)$ following \eqref{eq:QMNG:JTensor} and \eqref{eq:QMNG:BatchRHS}, respectively. Finally, we solve the least-squares problem to obtain $\delta{\bftheta}_k(\bfmu)$ and we update the weight vector as $\bftheta_{k + 1}(\bfmu) = \bftheta_k(\bfmu) + \delta t \delta \bftheta_k(\bfmu)$. The algorithm returns the trajectory of the weights $\bftheta_0(\bfmu), \dots, \bftheta_K(\bfmu)$ at the time steps $t_0, \ldots, t_K$.

\subsection{Computational costs}
For the offline phase, the computational costs are dominated by the greedy construction of the quadratic manifold and we refer to \cite{SchwerdtnerP2024Greedy} for a detailed discussion about its cost complexity.

To derive the cost complexity of the online phase, we assume that $I$ corresponds to piecewise cubic spline interpolation, which is also what we use in the numerical experiments. We further assume that the collocation points remain fixed over the time integration. This means that the costs of evaluating the spline interpolant are constant $\mathcal{O}(1)$ if the collocation points coincide with an equidistant grid and scale as $\mathcal{O}(\log(N))$ otherwise. %
To ease exposition, we only consider collocation points that coincide with equidistant grids in $\Omega$ and remain fixed over the time integration so that the cost of evaluating $I$ is constant. Then, the computational cost of computing $\bfV_\Xi$ and $\bfW_\Xi$ at $m$ collocation points scale as $\mathcal{O}(mn)$ and $\mathcal{O}(mn^2)$, respectively. Consequently, the cost of one evaluation of the continuous decoder $g_{\bfV, \bfW, I}$ scales as $\mathcal{O}(mn^2)$.
Taking into account the cost of the tensor product operation $\cdot$, we conclude that the assembly of the batch Jacobian $\bfJ_\Xi$  in each iteration of the time integration loop scale as $\mathcal{O}(mn^2)$.
Similar arguments show that computing the batch right-hand side $\bff_{\Xi}$ scales as $\mathcal{O}(c_fmn^2)$, where $c_f$ is a constant that corresponds to evaluating the right-hand side function $f$ at the function $g_{\bfV, \bfW, I}$, which can be substantial. For example, if automatic differentiation is used, then the costs of the automatic differentiation are captured by $c_f$. If finite-difference approximations of the derivatives in $f$ are used, then the $c_f$ depends on the number of grid points $N$.  
The costs of computing an update $\delta\bftheta_k(\bfmu)$ via the least-squares problem  scale as $\mathcal{O}(c_f m n^2)$ for a direct solver based on the SVD.
Therefore, we conclude that the total costs for one time step of a QMNG reduced model that is discretized with an explicit time integration scheme scale as $\mathcal{O}(c_f m n^2)$.

\begin{algorithm}[t]
\caption{QMNG offline phase}\label{alg:offline phase}
\begin{algorithmic}[1]
\Procedure{QMNGOffline}{$\bfQ, \bfX, n, \gamma, l$}

   \State Apply the greedy method to $\bfQ$ with parameters $n, \gamma, \ell$ to obtain $\bfs_0, \bfV$ and $\bfW$.

   \State Construct the interpolants $I(\cdot, \bfX, \bfs_0), I(\cdot, \bfX, \bfV), I(\cdot, \bfX, \bfW)$ 

   \State Define the decoder  $g_{\bfV,\bfW,I}$ 
 as in \eqref{eq:QMNG:ContEncoder}

    \State \textbf{return} $I(\cdot, \bfX, \bfs_0), I(\cdot, \bfX, \bfV), I(\cdot, \bfX, \bfW)$ and $g_{\bfV,\bfW,I}$ 
\EndProcedure
\end{algorithmic}
\end{algorithm}

\begin{algorithm}[t]
\caption{QMNG online phase}\label{alg:online phase}
\begin{algorithmic}[1]
\Procedure{OMNGOnline}{$\bfmu, \bfq_0(\bfmu), \delta t, K, I(\cdot, \bfX, \bfs_0), I(\cdot, \bfX, \bfV), I(\cdot, \bfX, \bfW),  g_{\bfV,\bfW,I}$}

    \State Encode the initial condition $\bfq_0(\bfmu)$ on quadratic manifold $\bftheta_0(\bfmu) = e_{\bfV}(\bfq_0(\bfmu))$ %
    \For{$k=1, \ldots, K$}
        \State Sample collocation points $\bfxi_1, \ldots, \bfxi_m \in \Omega$
        \State Assemble $\bfJ_{\Xi}(\bftheta_k(\bfmu))$ as in  \eqref{eq:QMNG:JTensor} %
            \State Assemble the batch right-hand side $\bff_{\Xi}(\bftheta_k(\bfmu); \bfmu)$ as in \eqref{eq:QMNG:BatchRHS}
            \State Compute $\delta{\bftheta}_k(\bfmu)$ as a solution of $\min_{\boldsymbol{\eta} \in \R^n} \norm{\bfJ_{\Xi}(\bftheta_k(\bfmu)) \boldsymbol{\eta} - \bff_{\Xi}(\bftheta_k(\bfmu); \bfmu)}_2^2$.
        \State Set $\bftheta_{k+1}(\bfmu) =\bftheta_k(\bfmu) + \delta t \delta{\bftheta}_k(\bfmu)$
    \EndFor
   \State \textbf{return} trajectory $\bftheta_0(\bfmu), \bftheta_1(\bfmu), \ldots, \bftheta_K(\bfmu)$
\EndProcedure
\end{algorithmic}
\end{algorithm}

\section{Neural Galerkin schemes with vector-valued quadratic  parametrizations}\label{sec:QMNGLinear}
We now consider the special case that the grid points $\bfx_1, \dots, \bfx_N$ of the full model \eqref{eq:Prelim:SemiDiscPDE} and the collocation points $\bfxi_1, \dots, \bfxi_m$ coincide. We derive an extension of the Neural Galerkin schemes that operates on vector-valued parametrizations to avoid having to introduce the interpolation operator as in Section~\ref{sec:QMNG}. This allows us to build on properties of the decoder functions corresponding to quadratic manifold to show the Jacobian matrices in this case remain full rank and the Neural Galerkin solution is locally unique. Furthermore, we show that for semi-discrete PDE problems \eqref{eq:Prelim:SemiDiscPDE} with right-hand sides that depend linearly on the state, Neural Galerkin schemes achieve online efficiency in the sense that the costs of time steps of QMNG reduced models in the online phase are independent of the dimension $N$ of the states of the full model \eqref{eq:Prelim:SemiDiscPDE}. 

\subsection{Neural Galerkin schemes with vector-valued quadratic manifold parametrizations}
The decoder function $g_{\bfV, \bfW}$ induces a vector-valued nonlinear parameterization 
\begin{equation}\label{eq:QMNG:VecNonLin}
\tilde{\bfq}: \Theta \to \mathbb{R}^N, \quad \tilde{\bfq}(\bftheta) = [\tilde{q}_1(\bftheta)\, \cdots\, \tilde{q}_N(\bftheta)]^{\top}\,.
\end{equation}
The component functions $\tilde{q}_1, \dots, \tilde{q}_N: \Theta \to \mathbb{R}$ correspond to point-wise approximations of the solution at the spatial coordinates $\bfx_1, \dots, \bfx_N$ at which the PDE problem \eqref{eq:Prelim:PDE} has been discretized to obtain the semi-discrete problem \eqref{eq:Prelim:SemiDiscPDE}; in other words the components functions $\tilde{q}_1, \dots, \tilde{q}_N$ are the component functions of the decoder $g_{\bfV,\bfW}$. 
Importantly, the vector-valued parametrization only provides approximations corresponding to the grid points $\bfx_1, \dots, \bfx_N$ and thus avoids having to perform an interpolation step as the parametrization $g_{\bfV,\bfW,I}$ used in Section~\ref{sec:QMNGLinear}. The weight vector is $\bftheta(t; \bfmu) \in \mathbb{R}^{n}$ with $n \ll N$. 

Analogous to the residual function \eqref{eq:Prelim:ResFun} for scalar-valued parametrizations, we now define a vector-valued residual function corresponding to the vector-valued parametrization $\tilde{\bfq}$ as
\begin{equation}\label{eq:QMNG:QMNGResFun}
\bfr: \Theta \times \Theta \to \mathbb{R}^N\,, \quad \bfr(\bftheta(t; \bfmu), \dot{\bftheta}(t; \bfmu))=\bfJ(\bftheta(t; \bfmu)) \dot{\bftheta}(t; \bfmu) - \bff(\tilde{\bfq}(\bftheta(t; \bfmu)); \bfmu)
\end{equation}
with the Jacobian matrix 
\begin{equation}\label{eq:QMNG:BatchGradient}
\bfJ(\bftheta(t; \bfmu)) = \begin{bmatrix}
\frac{\partial}{\partial \theta_1}\tilde{q}_1(\bftheta(t; \bfmu)) & \dots & \frac{\partial}{\partial \theta_p}\tilde{q}_1(\bftheta(t; \bfmu))\\
\vdots & & \vdots\\
\frac{\partial}{\partial \theta_1}\tilde{q}_n(\bftheta(t; \bfmu)) & \dots & \frac{\partial}{\partial \theta_p}\tilde{q}_n(\bftheta(t; \bfmu))
\end{bmatrix} \in \mathbb{R}^{N \times n}\,,
\end{equation}
and the right-hand side function $\bff$ of the full model \eqref{eq:Prelim:SemiDiscPDE}.
Notice that now the Jacobian matrix has size $N \times n$ and thus depends on the number of grid points $N$. 

Generalizing the Neural Galerkin conditions given in \eqref{eq:Prelim:ResEquation} to vector-valued parametrizations gives
\begin{equation}\label{eq:QMNG:NGCond}
\langle \partial_{\theta_i}\tilde{\bfq}(\bftheta(t; \bfmu)), \bfr(\bftheta(t; \bfmu), \dot{\bftheta}(t; \bfmu))\rangle_2 = 0\,,\qquad i = 1, \dots, n\,,
\end{equation}
which is formulated over the Euclidean inner product $\langle \cdot, \cdot \rangle_2$ so that the $L^2$ inner product over the spatial domain $\Omega$ used in \eqref{eq:Prelim:ResEquation} is numerically approximate via the mean Euclidean inner product over the spatial coordinates \eqref{eq:QMNG:SpatialCoordinates} of the semi-discrete problem \eqref{eq:Prelim:SemiDiscPDE}. 
Notice that the residual is set orthogonal to the space spanned by the columns of the Jacobian matrix $\bfJ(\bftheta(t; \bfmu))$.
The conditions \eqref{eq:QMNG:NGCond} correspond to the least-squares problem
\begin{equation}\label{eq:QMGNLinear:LSQProblem}
\min_{\dot{\bftheta}(t; \bfmu) \in \mathbb{R}^n} \|\bfJ(\bftheta(t; \bfmu))\dot{\bftheta}(t; \bfmu) - \bff(\tilde{\bfq}(\bftheta(t; \bfmu)); \bfmu)\|_2^2\,,
\end{equation}
to which we also refer in the following as vector-valued QMNG reduced model to distinguish it from the QMNG reduced model building on interpolation that is discussed in Section~\ref{sec:QMNG}.

We remark that in this special case of the nonlinear parametrization being a vector-valued decoder function corresponding to the grid points of the full model, the Neural Galerkin conditions coincide with the conditions used in \cite{LeeC2020Model} to define reduced dynamics; see \cite{zhang2024,BERMAN2024389} for other related schemes. %

\subsection{Full rankness of Jacobian matrix and uniqueness of Neural Galerkin solution}
It is a well-studied problem that Jacobian matrices of nonlinear parametrizations can become low rank so that the space against which the residual is set orthogonal in Neural Galerkin schemes collapses; see \cite{zhang2024} for an in-depth discussion. 
We now show that quadratic parametrizations that are trained with the greedy method introduced in \cite{SchwerdtnerP2024Greedy} avoid such a collapse if the Neural Galerkin schemes use the grid points as the collocation points. %
Recall that the greedy construction of quadratic manifolds yields matrices $\bfV$ and $\bfW$ that satisfy $\bfV^{\top}\bfW = 0$, which we use in the following lemma. Notice that this is a key difference to $\bfV_{\Xi}$ for which the analogous condition $\bfV_{\Xi}^{\top}\bfW_{\Xi} = 0$ can be violated. Modifying the greedy method to guarantee $\bfV_{\Xi}^{\top}\bfW_{\Xi} = 0$ is future work. %

We have the following result.
\begin{lemma}\label{lm:FullRank}
Let $\tilde{\bfq}$ be the nonlinear parametrization \eqref{eq:QMNG:VecNonLin} derived from a quadratic manifold constructed with the greedy approach discussed in Section~\ref{sec:Prelim:QM}. For all $\bftheta \in \mathbb{R}^n$, the Jacobian matrix $\bfJ(\bftheta)$ defined in \eqref{eq:QMNG:BatchGradient} of $\tilde{\bfq}$ has full column-rank $n$ and thus the least-squares problem~\eqref{eq:QMGNLinear:LSQProblem} has a unique solution.
\end{lemma}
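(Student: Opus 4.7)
The plan is to reduce the lemma to showing that the kernel of $\bfJ(\bftheta)$ is trivial for every $\bftheta \in \mathbb{R}^n$. Since $\bfJ(\bftheta) \in \mathbb{R}^{N \times n}$, a trivial kernel is equivalent to full column rank $n$, which in turn makes the normal matrix $\bfJ(\bftheta)^{\top}\bfJ(\bftheta)$ invertible, and hence the least-squares problem \eqref{eq:QMGNLinear:LSQProblem} is uniquely solvable. So the whole statement collapses to checking injectivity of $\bfJ(\bftheta)$ for arbitrary $\bftheta$.

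First, I would record the affine expression for $\bfJ(\bftheta)$, which is the vector-valued analogue of \eqref{eq:QMNG:AffineJ}--\eqref{eq:QMNG:KMap}: differentiating $\tilde{\bfq}(\bftheta) = \bfs_0 + \bfV\bftheta + \bfW h(\bftheta)$ yields
\[
\bfJ(\bftheta) \;=\; \bfV \;+\; \bfW\bigl(\bftheta\otimes\bfI + \bfI\otimes\bftheta\bigr).
\]
No interpolation is involved because the decoder is evaluated directly at the grid points of the full model, so $\bfV$ and $\bfW$ appear as trained, not as the interpolated counterparts $\bfV_\Xi, \bfW_\Xi$.

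Second, I would take an arbitrary $\bfx \in \mathbb{R}^n$ with $\bfJ(\bftheta)\bfx = 0$ and left-multiply by $\bfV^{\top}$. The second term vanishes because the greedy training procedure of \cite{SchwerdtnerP2024Greedy}, as recalled in Section~\ref{sec:Prelim:QM} immediately after \eqref{eq:Prelim:FitW}, guarantees $\bfV^{\top}\bfW = 0$. Since $\bfV$ has orthonormal columns, what remains is $\bfV^{\top}\bfV\bfx = \bfx = 0$, so $\bfJ(\bftheta)$ is injective and therefore has full column rank $n$ for every $\bftheta$, independent of the choice of $\bftheta$.

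The only substantive ingredient is the orthogonality $\bfV^{\top}\bfW = 0$: it is a property of the specific offline training, not of quadratic manifolds in general. As the paragraph preceding the lemma emphasizes, the interpolated matrices $\bfV_\Xi$ and $\bfW_\Xi$ of Section~\ref{sec:QMNG} need not satisfy the analogous relation $\bfV_\Xi^{\top}\bfW_\Xi = 0$, so the argument is genuinely restricted to the vector-valued setting in which collocation points coincide with grid points. I do not anticipate any other obstacle; once the orthogonality is invoked the algebra is immediate, and this is why the tangent-space collapse phenomenon of \cite{zhang2024} is ruled out here.
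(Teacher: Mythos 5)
Your proposal is correct and follows essentially the same route as the paper: both hinge on the affine form $\bfJ(\bftheta)=\bfV+\bfW(\bftheta\otimes\bfI+\bfI\otimes\bftheta)$ together with the greedy-training property $\bfV^{\top}\bfW=0$ and the orthonormality of $\bfV$. The paper phrases it as the column spaces of $\bfV$ and $\bfK(\bftheta)$ intersecting only in the zero vector, while you equivalently kill a kernel element by left-multiplying with $\bfV^{\top}$; the substance is identical, including the caveat that the argument does not carry over to the interpolated matrices $\bfV_{\Xi},\bfW_{\Xi}$.
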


\begin{proof}
Recall that $\bfV$ is a matrix with orthonormal columns and that $\bfV^{\top}\bfW = 0$ holds because we assume the quadratic manifold encoder $e_{\bfV}$ and decoder $g_{\bfV,\bfW}$ have been constructed with the greedy method introduced in \cite{SchwerdtnerP2024Greedy} and discussed in Section~\ref{sec:Prelim:QM}. 
Recall that $\bfJ(\bftheta)$ can be represented as \eqref{eq:QMNG:AffineJ}, with $\bfK$ defined analogously as $\bfK_{\Xi}$ in \eqref{eq:QMNG:KMapLinTensor}. Because $\bfK(\bftheta)$ consists of the multiplication of the matrix $\bfW$ with the matrix $\bftheta \otimes \bfI + \bfI \otimes \bftheta$, it holds that the column space of $\bfK(\bftheta)$ is a subspace of the column space of $\bfW$. 
Because $\bfV^{\top}\bfW = 0$ holds, the intersection of the column space spanned by $\bfV$ and $\bfW$ can only contain the zero vector. The matrix $\bfV$ has full column rank $n$, and thus the column space spanned by the $\bfV + \bfK(\bftheta)$ has to have dimension $n$, which shows that $\bfJ(\bftheta)$ has rank $n$.  The full rankness of $\bfJ(\theta)$ implies that the least-squares problem \eqref{eq:QMGNLinear:LSQProblem} is fully determined because $n < N$ and thus has a unique solution. 
\end{proof}

We now show that the Neural Galerkin conditions over quadratic manifolds lead to a well-posed problem that has a unique solution. %

\begin{proposition} %
    Assume that the right-hand side function $\bff(\cdot, \bfmu): \mathbb{R}^N \to \mathbb{R}^{N}$ of the semi-discrete problem  \eqref{eq:Prelim:SemiDiscPDE} is continuously differentiable  for all $\bfmu \in \Dcal$.  
    For a given $t_0 \in [0,T)$, $\bfmu \in \Dcal$, and initial condition $\bftheta_{t_0} \in \mathbb{R}^n$, there exists a unique trajectory $\bftheta(\cdot; \bfmu): [t_0, t_0+\varepsilon] \to \mathbb{R}^n$ that solves the Neural Galerkin conditions \eqref{eq:QMNG:NGCond}.  
\end{proposition}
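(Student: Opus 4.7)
The plan is to reduce the statement to a standard local existence-and-uniqueness result (Picard--Lindelöf) for an ordinary differential equation in $\bftheta$. The Neural Galerkin conditions \eqref{eq:QMNG:NGCond} are equivalent to the least-squares problem \eqref{eq:QMGNLinear:LSQProblem}, and by Lemma~\ref{lm:FullRank} the Jacobian $\bfJ(\bftheta)$ has full column rank $n$ for every $\bftheta \in \mathbb{R}^n$. Hence $\bfJ(\bftheta)^{\top}\bfJ(\bftheta)$ is invertible everywhere, and the normal equations yield the closed-form right-hand side
\begin{equation*}
F(\bftheta;\bfmu) \;=\; \bigl(\bfJ(\bftheta)^{\top}\bfJ(\bftheta)\bigr)^{-1}\bfJ(\bftheta)^{\top}\bff(\tilde{\bfq}(\bftheta);\bfmu)\,.
\end{equation*}
The problem is thereby rewritten as the initial-value problem $\dot{\bftheta}(t;\bfmu) = F(\bftheta(t;\bfmu);\bfmu)$ with $\bftheta(t_0;\bfmu) = \bftheta_{t_0}$.

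The second step is to verify that $F(\cdot;\bfmu)$ is locally Lipschitz on $\mathbb{R}^n$, in fact continuously differentiable, which is enough to invoke Picard--Lindelöf. For this I would use the structural results established earlier. By \eqref{eq:QMNG:AffineJ} and \eqref{eq:QMNG:KMapLinTensor} (applied to the vector-valued parametrization, with $\bfV$ and $\bfW$ in place of $\bfV_{\Xi}$ and $\bfW_{\Xi}$), $\bfJ(\bftheta)$ is affine in $\bftheta$; the composition $\bfJ(\bftheta)^{\top}\bfJ(\bftheta)$ is therefore polynomial in $\bftheta$ and, since it is invertible for every $\bftheta$, its inverse has entries that are rational functions of $\bftheta$ with nowhere-vanishing denominator, hence $C^{\infty}$ in $\bftheta$. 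The decoder $\tilde{\bfq}(\bftheta)$ is quadratic in $\bftheta$, and $\bff(\cdot;\bfmu)$ is assumed continuously differentiable, so $\bff(\tilde{\bfq}(\bftheta);\bfmu)$ is a $C^{1}$ function of $\bftheta$. Combining these pieces, $F(\cdot;\bfmu)$ is $C^{1}$, in particular locally Lipschitz.

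The final step is then immediate: the Picard--Lindelöf theorem provides some $\varepsilon > 0$ and a unique $C^{1}$ trajectory $\bftheta(\cdot;\bfmu):[t_0, t_0+\varepsilon] \to \mathbb{R}^n$ satisfying $\dot{\bftheta}(t;\bfmu) = F(\bftheta(t;\bfmu);\bfmu)$ with the prescribed initial datum, and by construction this trajectory satisfies \eqref{eq:QMNG:NGCond}. Conversely, any trajectory fulfilling \eqref{eq:QMNG:NGCond} must solve the same ODE, because the least-squares problem has a unique solution at every $\bftheta$, so uniqueness is inherited.

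The main obstacle is the well-posedness of the inverse $(\bfJ(\bftheta)^{\top}\bfJ(\bftheta))^{-1}$, which is precisely what Lemma~\ref{lm:FullRank} buys us; without the $\bfV^{\top}\bfW = 0$ property guaranteed by the greedy construction, one would have to worry about the tangent-space collapse alluded to earlier in the paper, and the proof via Picard--Lindelöf would break down at points where $\bfJ(\bftheta)$ loses rank. Aside from this, the argument is routine and I would not anticipate any further subtlety, since smoothness of $\bff$ transfers cleanly through the quadratic parametrization.
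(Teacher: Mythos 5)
Your proposal is correct and follows essentially the same route as the paper: rewrite the Neural Galerkin conditions as the normal-equation ODE, use Lemma~\ref{lm:FullRank} to get invertibility of $\bfJ(\bftheta)^{\top}\bfJ(\bftheta)$, verify the right-hand side is continuously differentiable (hence locally Lipschitz), and conclude with Picard--Lindel\"of. The only cosmetic difference is how smoothness of the inverse is argued: you invoke Cramer's rule (rational entries with nowhere-vanishing determinant), while the paper uses $\bfV^{\top}\bfW=\boldsymbol{0}$ to write $\bfJ(\bftheta)^{\top}\bfJ(\bftheta)=\bfI+\bfK(\bftheta)^{\top}\bfK(\bftheta)$ and the smoothness of matrix inversion on the set of invertible matrices; both are valid.
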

\begin{proof}
We first note that the Neural Galerkin conditions \eqref{eq:QMNG:NGCond} are the normal equations corresponding to the least-squares problem \eqref{eq:QMGNLinear:LSQProblem} and thus we can write $\dot{\bftheta}(t; \bfmu)$ as
\begin{equation}\label{eq:QMNG:NEQProof}
\dot{\bftheta}(t; \bfmu) = \left(\bfJ(\bftheta(t; \bfmu))^{\top}\bfJ(\bftheta(t; \bfmu))\right)^{-1}\bfJ(\bftheta(t; \bfmu))^{\top}\bff(\tilde{\bfq}(\bftheta(t; \bfmu)); \bfmu)\,, \quad \bftheta(0; \bfmu)=\bftheta_{t_0}\,,
\end{equation}
where the inverse exists because the matrix $\bfJ(\bftheta(t; \bfmu))$ has full column rank; see Lemma~\ref{lm:FullRank}. 

We now show that the right-hand side of \eqref{eq:QMNG:NEQProof} is continuously differentiable in $\bftheta(t; \bfmu)$, which implies that it is locally Lipschitz. The map $\bftheta \mapsto \bfJ(\bftheta)$ is linear over finite-dimensional spaces and thus is continuously differentiable. Using that $\bfV^{\top}\bfW = \boldsymbol{0}$ (see proof of Lemma~\ref{lm:FullRank}), we obtain that $\bftheta \mapsto \bfJ(\bftheta)^{\top}\bfJ(\bftheta) = \bfI + \bfK(\bftheta)^{\top}\bfK(\bftheta)$, which is polynomial in $\bftheta$ and thus continuously differentiable. Because $\bfI + \bfK(\bftheta)^{\top}\bfK(\bftheta)$ is an invertible matrix for any $\bftheta \in \mathbb{R}^n$ (see Lemma~\ref{lm:FullRank}) and the matrix inverse is continuously differentiable over the set of invertible matrices in $\mathbb{R}^{n \times n}$, we obtain that $\bftheta \to (\bfI + \bfK(\bftheta)^{\top}\bfK(\bftheta))^{-1}$ is continuously differentiable over $\bftheta \in \mathbb{R}^n$.  This means that the right-hand side of \eqref{eq:QMNG:NEQProof} is a map in $\bftheta$ with domain $\mathbb{R}^n$ that is a composition of continuously differentiable functions and thus it is continuously differentiable as well. Thus, the right-hand side is locally Lipschitz and the Picard-Lindel\"of theorem is applicable, which shows the existence and uniqueness of a trajectory in an interval $[t_0, t_0 + \epsilon]$ for an $\epsilon > 0$. \end{proof}

\subsection{Online cost complexity and online efficiency for linear PDE models}\label{sec:QMNGLinear:Precompute}

Let us now consider the case of linear full models, which means that the function $\bff$ in the semi-discrete problem \eqref{eq:Prelim:SemiDiscPDE} depends linearly on the state vector $\bfq$. In particular, there exists a matrix $\bfA \in \mathbb{R}^{N \times N}$ such that  
\begin{equation}\label{eq:QMNG:OnlineCostCompAMatrix}
\bff(\bfq(t; \bfmu); \bfmu) =\bfA\bfq(t; \bfmu).
\end{equation}
Notice that we dropped the dependence on $\bfmu$ in the matrix $\bfA$. All of the following can be conducted for parametrized matrices $\bfA$ as long as the parameter dependence is affine or matrix interpolation is used \cite{RozzaPateraSurvey,SIREVSurvey}. However, for ease of exposition, we do not pursue this further here. 
If these conditions apply, we achieve online efficiency with QMNG reduced models if the collocation points coincide with the grid points, which means that the cost complexity of the online phase of the QMNG reduced models scales independently of the dimension $N$ of the states of the full model. 
To achieve online efficiency, we need to pre-compute several terms in the offline phase. First, we pre-compute the four-way tensor $\mathcal{J} \in \mathbb{R}^{n \times n \times n \times n}$ such that
\begin{equation}\label{eq:QMNG:MassTensor}
\bftheta \cdot \Jcal \cdot \bftheta = \bfJ(\bftheta)^T\bfJ(\bftheta) - \bfI \in \mathbb{R}^{n \times n}\,,
\end{equation}
with $\bfJ$ the Jacobian matrix evaluated at $\bftheta$ defined in \eqref{eq:QMNG:BatchGradient} and the tensor-vector multiplication that is analogously defined as for the three-way tensor in \eqref{eq:QMNG:TensorOne} and \eqref{eq:QMNG:TensorTwo}. Notice that $\bftheta \cdot \Jcal \cdot \bftheta = (\bftheta \cdot \Kcal)^T(\Kcal \cdot \bftheta)$, with $\Kcal$ given in \eqref{eq:QMNG:KMapLinTensor} but for $\Xi = \mathcal{X}$. However, we pre-compute $\Jcal$ to avoid having to perform a multiplication online that scales with the dimension $N$ of the semi-discrete problem \eqref{eq:Prelim:SemiDiscPDE}. 
We also pre-compute the vector $\bhats_0 \in \R^n$ and the matrices $\bhatA \in \R^{n \times n}$ and $\bhatH \in \R^{n \times n^2}$ as
\begin{align}\label{eq:Const proj operators}
    \bhats_0 = \bfV^\top \boldsymbol{s}_0 & &\bhatA = \bfV^\top \boldsymbol{A} \bfV & & \bhatH = \bfV^\top \bfA \bfW\,,
\end{align}
where $\bfs_0$ is the reference used in the decoder $g_{\bfV, \bfW}$ and $\bfV$ and $\bfW$ are the matrices of the decoder. The matrix $\bfA$ is determined by the right-hand side of the semi-discrete problem \eqref{eq:Prelim:SemiDiscPDE} as in \eqref{eq:QMNG:OnlineCostCompAMatrix}. 
Furthermore, we define the tensors $\mathcal{S} \in \R^{n \times n}, \mathcal{A} \in \R^{n \times n \times n}$, and $\mathcal{H} \in \R^{n \times n \times n^2}$ by
\begin{align}\label{eq:Lin proj operators}
    \mathcal{S}_{jk} = \mathcal{K}_{ikj} [\bfs_0]_i, & &
    \mathcal{A}_{kjm} = \mathcal{K}_{ijk} A_{il} V_{lm}, & &
    \mathcal{H}_{kjm} =\mathcal{K}_{ijk} A_{il} V_{lm}\,,
\end{align}
in Einstein notation. 
To achieve online efficiency, we solve the least-squares problem \eqref{eq:QMGNLinear:LSQProblem} via its normal equations 
\begin{equation}\label{eq:QMNG:CostNormalEq}
(\bfI + \bftheta(t; \bfmu) \cdot \Jcal \cdot \bftheta(t; \bfmu))\dot{\bftheta}(t; \bfmu) = \bhatf(\bftheta(t; \bfmu))\,,
\end{equation}
where the right-hand side is defined as 
\begin{align}\label{eq:PreCompRHS linear}
    \bhatf(\bftheta) =\bhats_0 + \bhatA \bftheta + \bhatH(\bftheta \otimes \bftheta)
    + (\bftheta \cdot \mathcal{S}) + (\bftheta \cdot \mathcal{A}) \bftheta  + (\bftheta \cdot \mathcal{H}) (\bftheta \otimes \bftheta)\,.
\end{align}
Using the pre-computed quantities, the left-hand side and the right-hand side of the normal equations \eqref{eq:QMNG:CostNormalEq} can be computed online with costs that scale independently of the dimension $N$ of the full model: Computing the solution $\dot{\bftheta}(t; \bfmu)$ incurs costs that scale as $\mathcal{O}(n^3)$ but independently of the dimension $N$ of the full model. The online assembly costs of the remaining terms in the reduced dynamics~\eqref{eq:QMNG:CostNormalEq} are listed in Table \ref{tab:QMNGCompCosts}. The most expensive term is $(\bftheta \cdot \mathcal{H})(\bftheta \otimes \bftheta)$. Indeed, the cost of the operation $(\bftheta \cdot \mathcal{H})$ scales as $\mathcal{O}(n^4)$.
The computational cost of one time step therefore scales as $\mathcal{O}(n^4)$.

\begin{table}[]
\centering
\caption{Cost complexities of assembling the terms used in~\eqref{eq:QMNG:CostNormalEq} and~\eqref{eq:PreCompRHS linear}. QMNG reduced models for linear problems incur online costs that scale at most as $\mathcal{O}(n^4)$, which is independent of the dimension $N$ of the full model.}
\label{tab:QMNGCompCosts}
\begin{tabular}{@{}ll@{}}
\toprule
\textbf{term}                        & \textbf{cost complexity} \\ \midrule
$\bftheta \cdot \mathcal{J} \cdot \bftheta$               & $\mathcal{O}(n^4) + \mathcal{O}(n^3)$ \\
$\bhatA \bftheta$                    & $\mathcal{O}(n^2)$          \\
$\bhatH (\bftheta \otimes \bftheta)$ & $\mathcal{O}(n^2) + \mathcal{O}(n^3)$          \\
$\bftheta \cdot \mathcal{S}$         & $\mathcal{O}(n^2)$          \\
$(\bftheta \cdot \mathcal{A}) \bftheta$                   & $\mathcal{O}(n^3) + \mathcal{O}(n^2)$ \\
$(\bftheta \cdot \mathcal{H})(\bftheta \otimes \bftheta)$ & $\mathcal{O}(n^2) + \mathcal{O}(n^4) + \mathcal{O}(n^3)$ \\ \bottomrule
\end{tabular}
\end{table}

\subsection{Algorithms for online-efficient QMNG reduced models of linear PDE problems}
Algorithm~\ref{alg:linear offline phase} summarizes the offline phase for constructing QMNG reduced models for linear problems. The inputs to the offline phase are the snapshot matrix $\bfQ$, the system matrix $\bfA$ given in \eqref{eq:QMNG:OnlineCostCompAMatrix} of the full model, the reduced dimension $n$, the regularization parameter $\gamma$, and the number of candidate singular vectors $l$; for details on the last two parameters, we refer to \cite{SchwerdtnerP2024Greedy}. The vector $\bfs_0$ and the matrices $\bfV$ and $\bfW$ are computed with the greedy method in the same way as in Algorithm \ref{alg:offline phase}. The tensors $\mathcal{K}$ and $\mathcal{J}$ are computed following \eqref{eq:QMNG:KMapLinTensor} and \eqref{eq:QMNG:MassTensor}, respectively. The vectors and matrices $\bhats_0, \bhatA$, and $\bhatH$ are computed following \eqref{eq:Const proj operators}. Finally, the tensors $\mathcal{S}, \mathcal{A}$, and $\mathcal{H}$ are computed as in \eqref{eq:Lin proj operators}. The quantities are returned.

Algorithm~\ref{alg:linear online phase} summarizes the steps of the online phase QMNG reduced models with forward Euler time integration for linear problems. Analogously to Algorithm~\ref{alg:online phase}, other time integration schemes can be used. The inputs to the algorithm are the parameter vector $\bfmu$, the corresponding initial condition $\bfq_0(\bfmu)$, the time-step size $\delta t$ and the number of time steps $K$. Additionally, the quantities pre-computed in the offline phase are inputs too. 
The weight trajectory is initialized by encoding the initial condition $\bfq_0(\bfmu)$ into the quadratic manifold as $\bftheta_0(\bfmu)$. At each time step, we only have to assemble the matrix $\bfI + \bftheta_k(\bfmu) \cdot \mathcal{J} \cdot \bftheta_k(\bfmu)$ and the reduced right-hand side  $\bhatf(\btheta_k(\bfmu))$. Then, $\dot{\bftheta}_k(\bfmu)$ is obtained by solving the linear system~\eqref{eq:QMNG:CostNormalEq} of size $n\times n$ using a Cholesky decomposition and the parameter vector is updated. Indeed, due to the structure of $\mathcal{J}$, the matrix $\bfI + \bftheta_k(\bfmu) \cdot \mathcal{J} \cdot \bftheta_k(\bfmu)$ is always symmetric positive definite. The algorithm returns the trajectory of the reduced parameters at the time steps $t_0, \ldots, t_K$.

\begin{algorithm}[t]
\caption{Offline phase of QMNG reduced models for linear problems}\label{alg:linear offline phase}
\begin{algorithmic}[1]
\Procedure{QMNGOfflineLinear}{$\bfQ, \bfA, n, \gamma, l$}

   \State Apply the greedy method to $\bfQ$ with parameters $n, \gamma, \ell$ to obtain $\bfs_0, \bfV$ and $\bfW$.

   \State Assemble $\mathcal{K}$ according to~\eqref{eq:QMNG:KMapLinTensor}
        \State Assemble $\mathcal{J}$ according to ~\eqref{eq:QMNG:MassTensor}
        \State Assemble $\bhats_0, \bhatA, \bhatH$ according to ~\eqref{eq:Const proj operators}
        \State Assemble $\mathcal{S}, \mathcal{A}, \mathcal{H}$ according to ~\eqref{eq:Lin proj operators}
        \State \textbf{return} $\bfV, \bfW, \bhats_0, \bhatA, \bhatH, \mathcal{J}, \mathcal{S}, \mathcal{A}, \mathcal{H}$
    
\EndProcedure
\end{algorithmic}
\end{algorithm}

\begin{algorithm}[t]
\caption{Online phase of QMNG reduced models for linear problems}\label{alg:linear online phase}
\begin{algorithmic}[1]
\Procedure{QMNGOnlineLinear}{$\bfmu, \bfq_0(\bfmu), \delta t, K, \bfs_0, \bfV, \mathcal{J}, \bhats_0, \bhatA, \bhatH, \mathcal{S}, \mathcal{A},  \mathcal{H}$}

    \State Encode the initial condition $\bfq_0(\bfmu)$ on quadratic manifold $\bftheta_0(\bfmu) = e_{\bfV}(\bfq_0(\bfmu))$ 
    \For{$k=1, \ldots, K$}
            \State Assemble $\bfI + \bftheta_k(\bfmu) \cdot \mathcal{J} \cdot \bftheta_k(\bfmu)$
            \State Compute $\bhatf(\btheta_k(\bfmu))$ in~\eqref{eq:PreCompRHS linear}
            \State Compute $\delta{\bftheta}_k(\bfmu)$ as the solution of $(\bfI + \bftheta_k(\bfmu) \cdot \mathcal{J} \cdot \bftheta_k(\bfmu)) \delta{\bftheta}_k(\bfmu) = \bhatf(\btheta_k(\bfmu))$
            \State Set $\bftheta_{k+1}(\bfmu) =\bftheta_k(\bfmu) + \delta t \delta{\bftheta}_k(\bfmu)$
    \EndFor
   \State \textbf{return} trajectory $\bftheta_0(\bfmu), \bftheta_1(\bfmu), \ldots, \bftheta_K(\bfmu)$
\EndProcedure
\end{algorithmic}
\end{algorithm}

\section{Numerical experiments}\label{sec:NumExp}
We now demonstrate QMNG reduced models on a range of numerical problems. We first consider QMNG that directly uses the vector-valued quadratic decoder function as nonlinear parametrization and demonstrate runtime speedups and online efficiency when simulating acoustic waves in a two-dimensional spatial domain in Section~\ref{sec:NumExp:Wave} and charged particles in Section~\ref{sec:NumExp:Vlasov}. We then consider QMNG with interpolated decoder functions and apply it to the Burgers' equation in Section~\ref{sec:NumExp:Burgers}, where we show that the number of collocation points can be chosen smaller than the number of grid points used in the full model. As we argued above, separating collocation points in Neural Galerkin schemes from the grid points of the full model means that no empirical interpolation or hyper-reduction is needed for problems with nonlinear dynamics.

\subsection{Hamiltonian wave}\label{sec:NumExp:Wave}
We consider acoustic waves traveling in a two-dimensional spatial domain. 

\subsubsection{Setup}
Let the two-dimensional spatial domain be $\Omega = [-4, 4)^2$ and consider the acoustic wave equation in Hamiltonian form 
\begin{equation}
\begin{aligned}
    \partial_t \rho( \bfx, t; \mu) &= - \nabla \cdot \boldsymbol{v}(\bfx, t; \mu),\\
    \partial_t \boldsymbol{v}(\bfx, t; \mu) &= -\nabla \rho(\bfx, t;\mu),\\
    \rho(\bfx, 0; \mu) &= \rho_0(\bfx;\mu),\\
    \boldsymbol{v}(\bfx, 0) &= \boldsymbol{0},
\end{aligned}
\end{equation}
where $\rho: \Omega \times [0, T] \times \Dcal \to \mathbb{R}$ and $\bfv: \Omega \times [0, T] \times \Dcal \to \mathbb{R}^2$ denote the density and velocity fields, respectively; see  \cite[Appendix A.2]{SchwerdtnerP2024Greedy} for more details. The physics parameters are in $\mu \in \Dcal = [0, 1]$ and the time interval is $[0, 8]$. 
We impose periodic boundary conditions and consider for the density field the initial condition
\[
    \rho_0(\bx; \mu) = \exp(-(6 + \mu)^{2} ||\bfx - \bfx_0||_2^2)\,,
\]
with $\bfx_0 = [2, 2]^\top$. The initial velocities are set to zero. The full model is obtained with a central finite-difference scheme in space. We use 1024 grid points per spatial dimension, which results in a total full-model dimension of $N = 3,145,728$ taking into account that we consider three quantities. In time, we discretize with fourth-order Runge-Kutta and a time-step size $10^{-3}$. 

\begin{table}[]
\caption{Test parameters used for computing the averaged relative errors of the QMNG reduced models.}
\label{tab:test parameters}
\centering
\begin{tabular}{@{}ll@{}}
\toprule
\textbf{problem} & \textbf{test  physics parameters}                \\ \midrule
acoustic waves  & 0.0556, 0.3889, 0.5000, 0.7222, 0.9444  \\
charged particles            & 0.2765,  0.2724, 0.3500, 0.3827, 0.4398 \\
Burgers' equation & 0.3810, 0.3837, 0.5000, 0.5490, 0.6347\\
\bottomrule
\end{tabular}
\end{table}

For training  quadratic manifolds, we generate snapshot data with the full model following the procedure described in Section~\ref{sec:QMNG:Snapshots}: We generate ten trajectories corresponding to the $M' = 10$ equidistant training parameters $\mu_1^{\text{(train)}}, \dots, \mu_{M'}^{\text{(train)}}$ in $\Dcal$. We subsample the ten training trajectories in time by a factor 40 so that one training trajectory contains 200 snapshots in time. In total, there are 2,000 snapshots, which form the snapshot matrix \eqref{eq:QMNG:SnapshotQ}. The reference point $\bfs_0$ is the mean over all snapshots. We also generate five test trajectories corresponding to the five test parameters $\mu_1^{\text{(test)}}, \dots, \mu_5^{\text{(test)}}$ shown in Table~\ref{tab:test parameters}. Notice that the test parameters have a large distance from the $M' = 10$ equidistant training parameters. We remark that our approach does not necessarily preserve the Hamiltonian corresponding to this problem over time. Combining the proposed QMNG reduced models with the nonlinear embeddings introduced in \cite{doi:10.1137/23M1607799} to preserve quantities such as energy remains future work. 

\begin{figure}
    \centering
    \includegraphics[width=\linewidth]{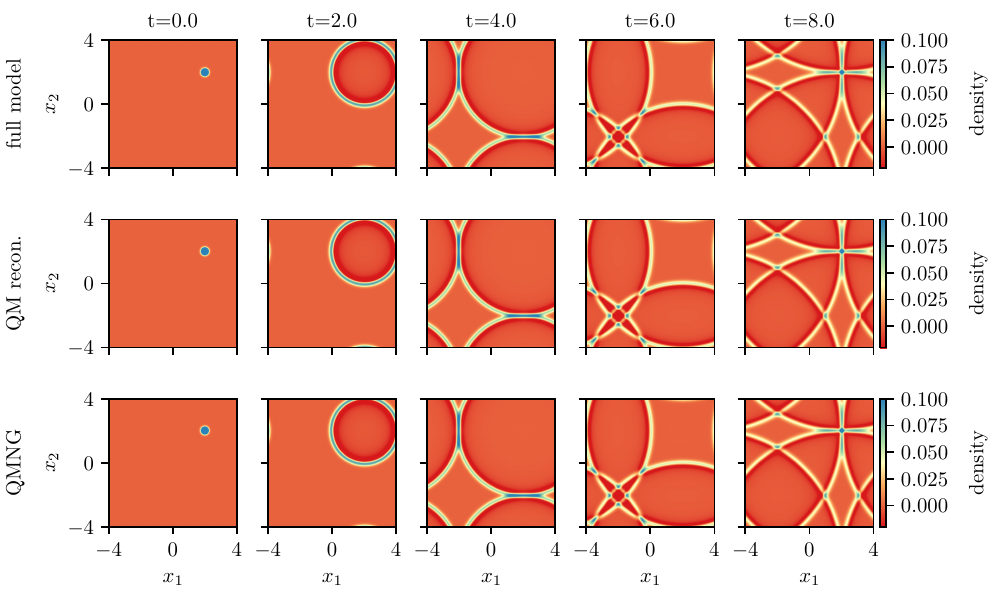}
    \caption{Acoustic waves: Density field snapshots from the full model of the Hamiltonian wave problem (first row), their approximations on the quadratic manifold (second row) and the corresponding approximation obtained with the QMNG reduced model of dimension $n=30$ (third row). Results are shown for the test parameter $\mu_3^{\text{(test)}} = 0.5$.}
    \label{fig:hwave snapshots}
\end{figure}

\subsubsection{Results}
We now take the snapshot data and train quadratic manifolds of dimension $n \in \{5, 10, \dots, 50\}$ with the greedy method introduced in \cite{SchwerdtnerP2024Greedy} and as discussed in Section~\ref{sec:Prelim:QM}. The number of candidate singular vectors is $200$. The regularization parameter $\gamma$ used in fitting the weight matrix $\bfW$ in \eqref{eq:Prelim:FitW} is manually selected based on training data and set to $\gamma = 10^{-6}$.
Let us consider Figure~\ref{fig:hwave snapshots} now. The first row shows the density field of the full-model solution over times $t \in \{0, 2, 4, 6, 8\}$ for test parameter $\mu_3^{\text{(test)}} = 0.5$. The second row shows the reconstruction on the quadratic manifold of dimension $n = 30$, which agrees well with the full-model solution. The construction of a full-model solution $\bfq(t; \bfmu) \in \mathbb{R}^N$ on the quadratic manifold is obtained as $\hat{\bfq}(t; \bfmu) = g_{\bfV,\bfW}(e_{\bfV}(\bfq(t; \bfmu))$, where $e_{\bfV}$ is the encoder function and $g_{\bfV,\bfW}$ is the decoder function of the quadratic manifold. Let us now consider the QMNG reduced model \eqref{eq:QMGNLinear:LSQProblem} when the collocation points coincide with the grid points of the full model, $\bfxi_1 = \bfx_1, \dots, \bfxi_N = \bfx_N$, which is the case considered in Section~\ref{sec:QMNGLinear}. This means that the nonlinear parameterization is the vector-valued quadratic decoder function $g_{\bfV,\bfW}$. Because the acoustic wave equation is linear in the solution variables, we can pre-compute quantities as discussed in Section~\ref{sec:QMNGLinear:Precompute} and achieve online efficiency, which means that the online costs of the QMNG reduced model scales independently of the dimension $N$ of the full model. The QMNG reduced model constructs a weight trajectory $\bftheta(t_1; \mu), \dots, \bftheta(t_K; \mu)$ for the test parameter $\mu_3^{(\text{test})} \in \Dcal$, which then gives rise to the approximations $\tilde{\bfq}(t_1; \mu) = g_{\bfV,\bfW}(\bftheta(t_1; \mu)), \dots, \tilde{\bfq}(t_K; \mu) = g_{\bfV,\bfW}(\bftheta(t_K; \mu))$, which are plotted in the third row of Figure~\ref{fig:hwave snapshots} for $t \in \{0, 2, 4, 6. 8\}$. The QMNG reduced solutions are in close agreement with the full-model solutions.

\begin{figure}
    \centering
        \begin{subfigure}[T]{0.48\textwidth}
        \includegraphics{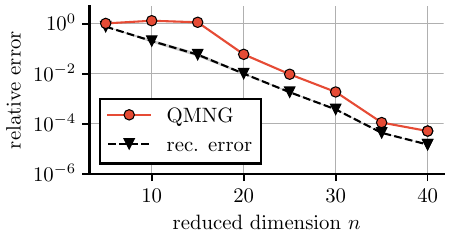}
    \caption{relative error}
    \end{subfigure}
    \hfill
    \begin{subfigure}[T]{0.48\textwidth}
        \includegraphics{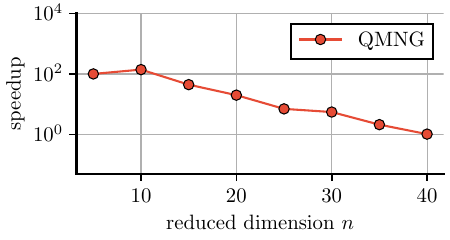}
    \caption{speedup}
    \end{subfigure}

    \caption{Acoustic waves: Averaged relative test error of QMNG reduced models for different reduced dimensions $n$ and the corresponding online speedups. The QMNG reduced model achieves an error that is close to the reconstruction error of the data on the quadratic manifold, which shows that QMNG leverages the expressivity of the quadratic manifold.} %
    \label{fig:Wave:QMNG:Error}
\end{figure}

To assess the accuracy of QMNG reduced solutions in more detail, we consider the averaged relative error\begin{equation}\label{eq:NumExp:RelError}
\mathcal{E} = \frac{1}{5}\sum_{i = 1}^5 \sum_{k = 1}^K \frac{\|\bfq(t_k; \mu_i^{\text{(test)}}) - \breve{\bfq}(t_k; \mu_i^{\text{(test)}})\|_2}{\|\bfq(t_k; \mu_i^{\text{(test)}})\|_2}\,,
\end{equation}
where $\breve{\bfq}(t_k; \mu_i^{\text{(test)}})$ is an approximation of the full-model solution $\bfq(t_k; \mu_i^{\text{(test)}})$ such as  the approximation $\hat{\bfq}(t_k; \mu_i^{\text{(test)}})$ obtained by reconstructing the full-model solution  on the quadratic manifold or the QMNG reduced solution $\tilde{\bfq}(t_k; \mu_i^{\text{(test)}})$. We plot the averaged relative error \eqref{eq:NumExp:RelError} in Figure~\ref{fig:Wave:QMNG:Error}a and the speedup obtained with the QMNG reduced model in Figure~\ref{fig:Wave:QMNG:Error}b over the reduced dimensions $n \in \{5, 10, \dots, 50\}$. The shaded area in Figure~\ref{fig:Wave:QMNG:Error}a corresponds to the standard deviation of the averaged relative error $\mathcal{E}$ over the test parameters. The error of the QMNG reduced solution decays similarly to the reconstruction error, which shows that QMNG reduced models leverage the expressivity of the quadratic manifolds. The speedup plot shows that the QMNG reduced model achieves orders of magnitude speedups for reduced dimensions $n \leq 30$. %
For dimensions $n \geq 40$, QMNG reduced models have a higher runtime than the full model in this example, which is explained by the online cost complexity that scales as $\mathcal{O}(n^4)$ in the reduced dimension $n$, even though it scales independently of the full-model dimension $N$.

\begin{figure}
    \centering
    \begin{subfigure}[T]{0.48\textwidth}
    \includegraphics{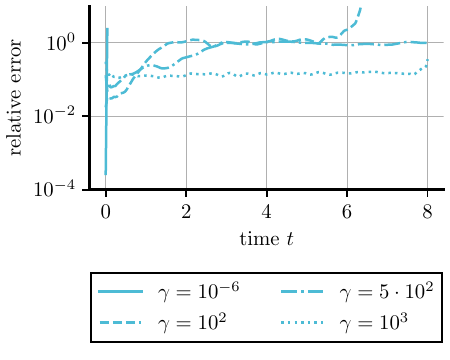}
    \caption{QMNG, constant-in-time test space}
    \end{subfigure}
    \hfill
    \begin{subfigure}[T]{0.48\textwidth}
    \includegraphics{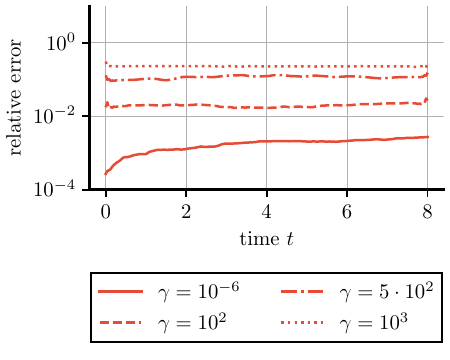}
    \caption{QMNG}
    \end{subfigure}
    \caption{Acoustic waves: Reduced models that use constant approximations of the Jacobian matrix are unstable for quadratic manifolds that are fitted well to the data (low regularization parameter $\gamma$). In contrast, QMNG reduced models use the actual Jacobian matrix without an approximation and so minimize the residual norm, which leads to stable predictions also when quadratic manifolds are trained with small regularization parameters.}
    \label{fig:hwave relerrs over time}
\end{figure}

\begin{figure}
    \centering
        \begin{subfigure}[T]{0.48\textwidth}
        \includegraphics{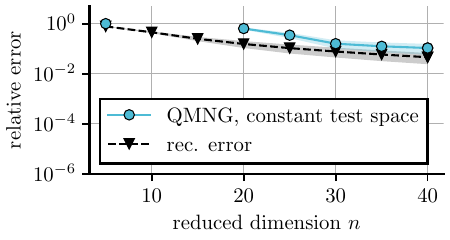}
    \caption{averaged relative error}
    \end{subfigure}
    \hfill
    \begin{subfigure}[T]{0.48\textwidth}
        \includegraphics{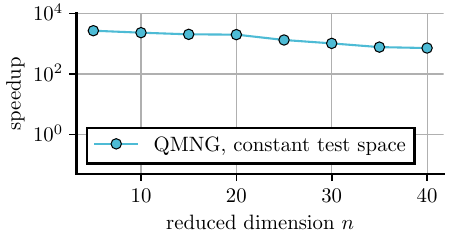}
    \caption{speedup}
    \end{subfigure}

    \caption{Acoustic waves: The reduced dynamics corresponding to constant approximations of the Jacobian matrix are unstable in our experiments and require strongly regularized quadratic manifolds to be stable (here $\gamma = 10^3$). At the same time, using constant test spaces avoids having to assemble the Jacobian matrix as in QMNG reduced models, which leads to higher speedups compared to QMNG.} %
    
    \label{fig:Wave:QMConst:Error}
\end{figure}

\subsubsection{Comparison to reduced models with constant-in-time test spaces}
We discussed in Section~\ref{sec:QMNG:ResNormMin} that QMNG reduced models minimize the residual norm by setting the residual orthogonal to the space spanned by the columns of the Jacobian matrix of the quadratic decoder function. We now study the importance of this test space. For this, we consider the zeroth-order approximation $\bar{\bfJ}$ that leads to the reduced dynamics \eqref{eq:QMNG:OrthoDyn}. We show in Figure~\ref{fig:hwave relerrs over time} the error that is achieved with QMNG reduced models that minimize the norm of the residual and the analogous reduced models \eqref{eq:QMNG:OrthoDyn} that set the residual orthogonal to a constant-in-time test space, which does not necessarily lead to residual minimization. To obtain approximations with the constant-in-time test spaces, we need to use stronger regularization when constructing the manifold:  For regularization parameters $\gamma \in \{10^{-6}, 10^2, 5 \times 10^2, 10^3\}$, the QMNG reduced models achieve stable behavior, even though the error increases as the regularization parameter $\gamma$ is increased because the quadratic manifold is less well fit to the data as $\gamma$ increases. In contrast, the analogous reduced model that sets the residual orthogonal to the constant approximation of the Jacobian is unstable for quadratic manifolds trained with small regularization parameters, i.e., when the quadratic manifold is well fit to data. Only when the regularization parameter is increased, and correspondingly the quadratic manifold poorer approximates the snapshots, the reduced model provides stable predictions; however the predictions have larger errors corresponding to the poorer quadratic manifold. Figure~\ref{fig:Wave:QMConst:Error} report the error decay for reduced models with constant-in-time test spaces, which show less stable behavior and achieve orders of magnitude higher errors than QMNG reduced models because the regularization parameter has to be so large. At the same time, the runtime of the reduced models based on the constant-in-time test spaces is lower than the runtime of the QMNG reduced model because assembling the Jacobian matrix incurs higher costs. 

The results in Figure~\ref{fig:hwave relerrs over time} and Figure~\ref{fig:Wave:QMConst:Error} indicate that minimizing the residual norm as QMNG reduced models is key for stable predictions.

\subsection{Charged particles in fixed potential}\label{sec:NumExp:Vlasov}
We now consider charged particles, which are governed by the Vlasov equation with a fixed electrostatic potential.

\subsubsection{Setup}
We consider the Vlasov equation
\begin{equation}\label{eq:NumExp:Vlasov:PDE}
\begin{aligned}
    \partial_t q(x, v, t; \mu) &= -v \partial_x q(x, v, t; \mu) + \partial_x \phi(x; \mu) \partial_v q(x, v, t; \mu),\\
    q(x, v, 0; \mu) &= q_0(x, v),
\end{aligned}
\end{equation}
where $x \in \Omega_x = [-1, 1)$ denotes the spatial coordinate and $v \in \Omega_v = [-1, 1)$ the velocity coordinate. The function $\phi: \Omega_x \to \R$ denotes an electrostatic potential, which is constant in time in our setup. We impose periodic boundary conditions and solve the problem over the time interval $[0, 3.2]$. The initial condition $q_0$ is set to
\begin{align}
    q_0(x, v) = \frac{1}{2 \pi \sigma} \exp \left( - \frac{1}{\pi \sigma}
    \left[\sin(\frac{\pi}{2}(x - x_0))^2 + \sin(\frac{\pi}{2}(v - v_0))^2 \right] \right),
\end{align}
where $\sigma = 8 \cdot 10^{-3}$, $x_0 = -0.2$, and $v_0 = 0$ are fixed. The problem is parameterized via the potential function $\phi$, 
\begin{align}
    \phi(x; \mu) = -\alpha (1 + \cos(\pi(x + \mu))^4) - \beta \sin(\pi x), 
\end{align}
where $\mu \in [0.25, 0.45]$, $\alpha = 0.2$ and $\beta = 0.1$ are fixed. The full model is obtained with a fourth-order central finite difference scheme in space and fourth-order Runge-Kutta scheme in time. We use 512 equidistantly spaced grid points in each dimension, which leads to the dimension $N = 262,144$ of the full-model states $\bfq(t; \mu)$. The time-step size is $10^{-3}$.

We generate snapshot data with the full model by selecting $M' = 50$ training physics parameter $\mu_1, \dots, \mu_{M'}$ that are equidistantly spaced in $\Dcal$. The training trajectories are subsampled by a factor ten in time so that each trajectory contains 320 snapshots. In total, there are 16,000 snapshots that form the snapshot matrix \eqref{eq:QMNG:SnapshotQ}, which is used for training the encoder and decoder functions of quadratic manifolds. Notice that we use five times more training physics parameters compared to the acoustic wave problem, which reflects that this problem is more challenging for quadratic manifolds. The reference point $\bfs_0$ is the mean over all training snapshots. As in the previous example, we generate five test trajectories corresponding to five test physics parameters, which are listed in Table \ref{tab:test parameters}. Each of the test physics parameters is a midpoint between two neighboring training physics parameters.

\subsubsection{Results}

\begin{figure}
    \centering
    \includegraphics[width=\linewidth]{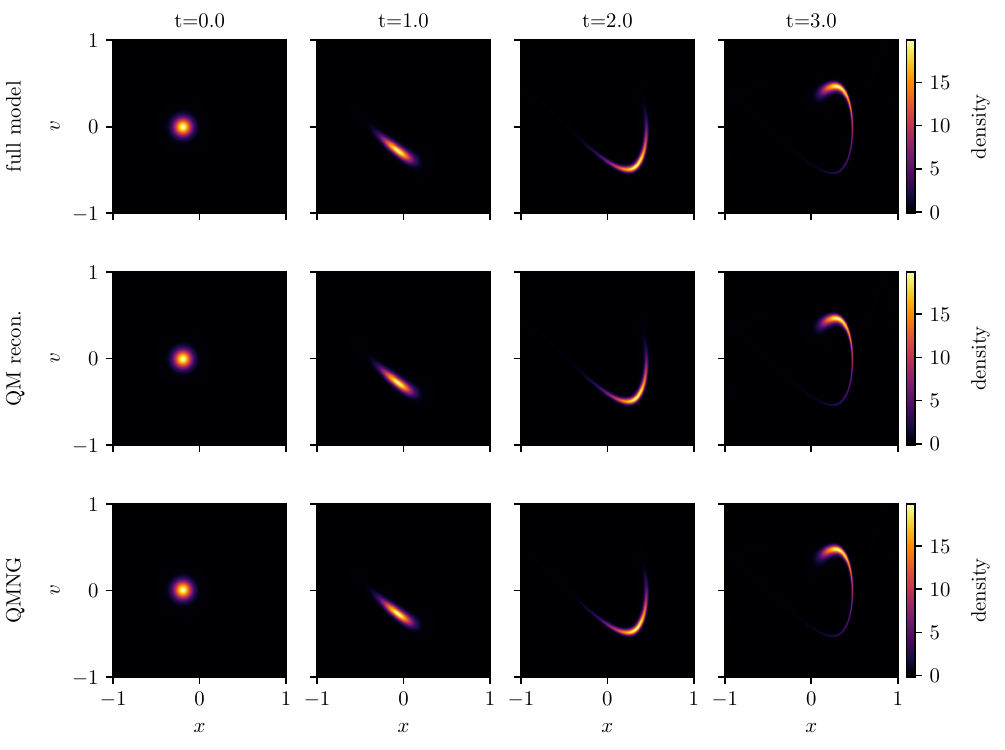}
    \caption{Charged particles: Snapshots from the full model of the Vlasov problem, their reconstruction after projection onto the quadratic manifold and the corresponding reduced solutions from the QMMG reduced model of size $n = 30$.} %
    \label{fig:vlasov snapshots}
\end{figure}

\begin{figure}
    \centering
        \begin{subfigure}[T]{0.48\textwidth}
        \includegraphics{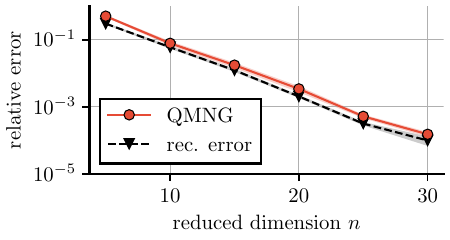}
    \caption{averaged relative error on test parameters}
    \end{subfigure}
    \hfill
    \begin{subfigure}[T]{0.48\textwidth}
        \includegraphics{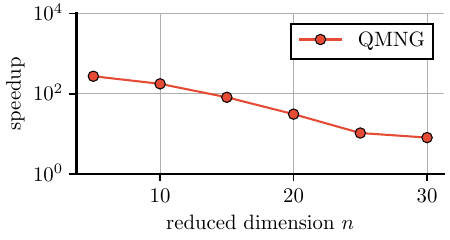}
    \caption{speedups}
    \end{subfigure}

    \caption{Charged particles: The QMNG reduced model leverages the expressivity of quadratic manifolds to achieve a similar error decay as the reconstruction error. At the same time, up to two orders of magnitude speedups are achieved compared to the full model.} %
    \label{fig:Vlasov:QMNG:Error}
\end{figure}

\begin{figure}
    \centering
    \begin{subfigure}[T]{0.48\textwidth}
    \includegraphics{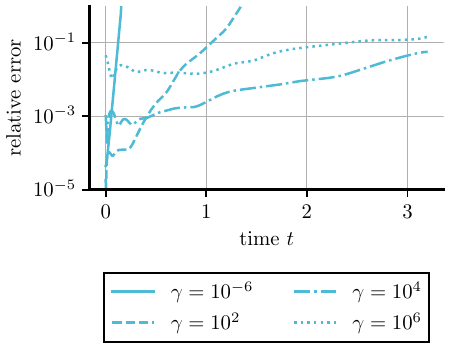}
    \caption{QMNG, constant-in-time test space}
    \end{subfigure}
    \hfill
    \begin{subfigure}[T]{0.48\textwidth}
    \includegraphics{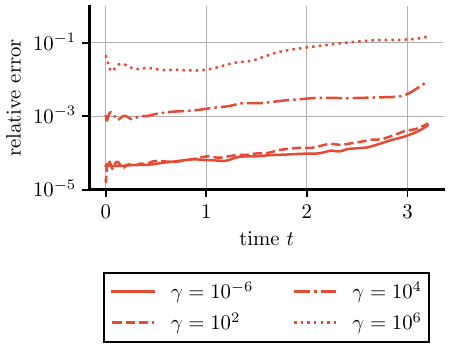}
    \caption{QMNG}
    \end{subfigure}
    \caption{Charged particles: Using a constant-in-time test space (zeroth-order approximation of Jacobian) leads to unstable predictions for small regularization parameters when the quadratic manifold is fitted well to data. In contrast, the proposed QMNG reduced models use a test space that varies with time and that corresponds to residual minimization, which provides stabler predictions also when the quadratic manifolds are fitted well to data.}
    \label{fig:vlasov relerrs over time}
\end{figure}

We use the generated snapshot data to train quadratic manifolds with dimension $n \in \{5, 10, \dots, 30\}$. The regularization parameter has been chosen as $\gamma = 10^{-6}$. For times $t \in \{0, 1, 2, 3\}$, we compare the full-model solution at test physics parameter $\mu_3^{\text{(test)}} = 0.35$ with the reconstruction on the quadratic manifold of dimension $n = 30$ in Figure~\ref{fig:vlasov snapshots}. The approximation on the quadratic manifold is in good agreement with the full-model solution. Let us now consider the corresponding QMNG reduced model, which is based on the vector-valued decoder function as nonlinear parametrization analogous to the setup of  the previous example discussed in Section~\ref{sec:NumExp:Wave}. The Vlasov equation \eqref{eq:NumExp:Vlasov:PDE} is linear in the solution variable $q$ and thus we can pre-compute the terms described in Section~\ref{sec:QMNGLinear:Precompute} to achieve online efficiency with QMNG reduced models in this example. The third row of Figure~\ref{fig:vlasov snapshots} shows that the QMNG reduced solution at the test parameter $\mu_i^{\text{(test)}} = 0.35$ is in close agreement with the full-model solution.

In Figure~\ref{fig:Vlasov:QMNG:Error}a, we plot the averaged relative error \eqref{eq:NumExp:RelError} over the test physics parameter for the QMNG reduced model as well as the reconstruction error of the test trajectories on the quadratic manifold. The shaded area corresponds to the standard deviation of the relative error over the test parameters. The error of the QMNG reduced solution decays similarly as the reconstruction error, which shows that QMNG can leverage well the expressivity the quadratic manifold. Figure~\ref{fig:Vlasov:QMNG:Error}b shows the speedup, which is multiple orders of magnitude compared to the full model in this example.

Let us now demonstrate the importance of minimizing the residual norm by considering the analogous reduced model that sets the residual orthogonal to the constant test space spanned by the zeroth-order approximation of the Jacobian. Thus, the residual norm is not minimized in this case; see Section~\ref{sec:QMNG:ResNormMin}. Analogous to the results for the acoustic wave equation, the reduced model based on the constant-in-time test space requires a quadratic manifold with a large regularization parameter $\gamma$ to provide stable approximations; see Figure~\ref{fig:vlasov relerrs over time}. Because the quadratic manifold has to be trained with a large regularization parameter,  the full potential of quadratic manifolds cannot be leveraged because a larger regularization parameter prevents fitting the quadratic manifold to high accuracy. This is in contrast to the QMNG reduced model, which provides accurate and stable approximations also for small regularization parameters so that the quadratic manifold is fit well to the snapshot data.  Correspondingly, the QMNG reduced model can achieve orders of magnitude lower errors. %

\subsection{Burger's equation}\label{sec:NumExp:Burgers}
We consider Burger's equation over a one-dimensional spatial domain, which is nonlinear in the solution variable. 
\subsubsection{Setup} Let $\Omega = [-1, 1)$ be the spatial domain and consider Burgers' equation
\begin{align}
    \partial_t q(x, t; \mu) &= q(x, t; \mu) \partial_x q(x, t; \mu) + \alpha \partial_{xx} q(x, t; \mu),\\
    q(x, 0; \mu) &= q_0(x; \mu),
\end{align}
with the viscosity $\alpha$ set to $0.005$, the physics parameters $\mu \in \Dcal = [0.35, 0.65]$, and the time interval $[0, 1]$. 
We impose periodic boundary conditions and consider the initial condition
\begin{align}
    q(x; \mu) = \frac{1}{2 \pi \sigma} \exp \left( - \frac{1}{\pi \sigma} \left|\sin(\frac{\pi}{2}(x - \mu))\right|^2\right),
\end{align}
with $\sigma = 0.005$. The full model is obtained with a finite difference scheme in space, where we use a fourth-order central finite difference stencil for the first-order derivative and a second-order central finite difference stencil for the second-order derivative. We use $N = 2048$ equidistant grid points in space. In time, we discretize with a time-step size of $10^{-4}$ and the  fourth-order explicit Runge-Kutta method.

We generate snapshot data for training the quadratic manifold encoder and decoder functions. To do this, we select the $M' = 50$ equidistant physics parameter $\mu_1, \dots, \mu_{M'}$ in the domain $\Dcal$ and construct the corresponding 50 time trajectories. We subsample the trajectories in time by a factor 50 such that each training trajectory consists of $200$ snapshots in time and the quadratic manifolds are trained on $10,000$ snapshots in total, i.e., the number of columns of the snapshot matrix $\bfQ$ given in \eqref{eq:QMNG:SnapshotQ} is $10,000$. The reference point $\bfs_0$ is the mean over all snapshots. To evaluate the performance of the reduced models, we generate five  test trajectories that correspond to five physics parameters  that are chosen at the midpoints between the training parameters in $\Dcal$ and that are listed in Table \ref{tab:test parameters}.

\subsubsection{Results} We now take the snapshot data and train quadratic manifolds of dimension $n \in \{5, 10, ..., 50\}$ with regularization parameter $\gamma = 10^{-6}$ in \eqref{eq:Prelim:FitW}, which has been found via manual parameter tuning based on the training data. The first row of Figure~\ref{fig:BurgersSnapshots} shows the 
numerical solution field corresponding to the test parameter $\mu_3^{\text{(test)}} = 0.5$ for times $t \in \{0, 0.2, 0.5, 0.8, 1.0\}$. The second row of Figure~\ref{fig:BurgersSnapshots} shows the reconstruction of the full-model solution on the quadratic manifold of dimension $n = 30$, i.e., taking a full-model solution $\bfq(t; \mu)$ and computing the reconstruction $\hat{\bfq}(t; \mu) = g_{\bfV,\bfW}(e_{\bfV}(\bfq(t; \mu))$. Comparing the first and second row of Figure~\ref{fig:BurgersSnapshots} shows that the quadratic manifold can well approximate the snapshots in this example. Let us now consider the third row of Figure~\ref{fig:BurgersSnapshots}, which shows the approximation obtained with the QMNG reduced model that uses interpolation on 1024 uniformly sampled collocation points $\bfxi_1, \dots, \bfxi_{1024}$, which are resampled at each time step. Recall that the QMNG reduced model \eqref{eq:QMNG:NGXi} computes a trajectory of weights $\bftheta(t_1; \bfmu), \dots, \bftheta(t_K, \bfmu) \in \mathbb{R}^n$, which lead to the approximations 
\[
\tilde{\bfq}(t_i; \mu) = [g_{\bfV,\bfW,I}(\bfx_1, \bftheta(t_i; \mu)\, \cdots\, g_{\bfV,\bfW,I}(\bfx_N, \bftheta(t_i; \mu))]^{\top} \in \mathbb{R}^N
\]
where $\bfx_1, \dots, \bfx_N$ are the grid points corresponding to the full model and $\mu$ is a test physics parameter. Comparing the first and third row of Figure~\ref{fig:BurgersSnapshots} shows that the QMNG reduced model accurately predicts the full-model solutions for this test parameter.

Let us now consider the averaged relative error \eqref{eq:NumExp:RelError} over all test parameters. 
The averaged relative errors corresponding to QMNG reduced models over a range of reduced dimensions are shown in Figure~\ref{fig:BurgersErrors}a. The shaded area corresponds to the standard deviation of the relative errors over the test parameters. If the collocation points $\bfxi_1, \dots, \bfxi_m$ are selected equidistantly in the spatial domain and there are $m = 1024$ of them, which is half of the number of grid points $N = 2048$ used for the full model, then the error decays rapidly until about dimension $n = 40$, before it levels off. The same leveling off can be seen when using $m = 1024$ collocation points that are uniform randomly chosen in the spatial domain as well as in the reconstruction error on the quadratic manifold. The leveling off is due to the limited snapshot data for training the quadratic manifold as well as the limited accuracy of the full-model snapshots.

Figure~\ref{fig:BurgersErrors}b shows the averaged relative error of the QMNG reduced solutions for $m \in \{256, 512, 1024\}$ uniformly sampled collocation points. The results indicate that the more collocation points are used in QMNG the more reliable the approximations become. At the same time, it can be noted that the QMNG reduced solution achieves a comparable error decay as the reconstruction error already with $m = 256$ collocation points, which is almost $10\times$ lower than the number of grid points in the full model. This is an indication that separating the collocation points from the grid points can be interpreted as a form of hyper-reduction, which is naturally integrated in Neural Galerkin schemes when applied to quadratic manifolds. 

We close this section by clearly stating that the implementation of our QMNG reduced model does not achieve speedups compared to the full model in this example. One reason is that the full model is over a one-dimensional spatial domain and thus cheap to solve. Another reason is that automatic differentiation incurs costs that are substantial in our implementation.

\begin{figure}
    \centering
    \includegraphics[width=\linewidth]{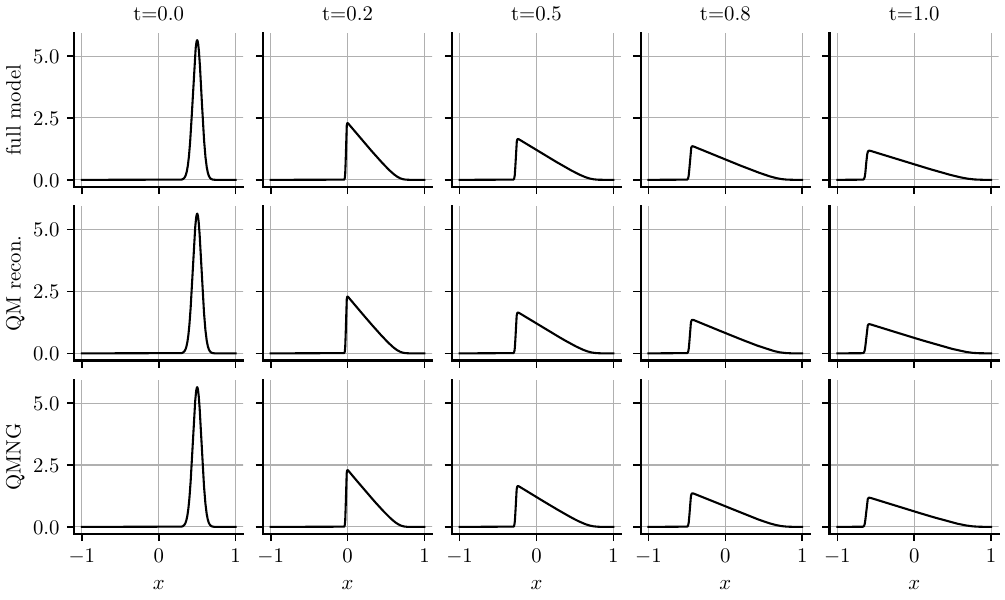}
    \caption{Burgers' equation: Velocity field snapshots from the full model of the Burger's problem, their reconstruction after projection onto the quadratic manifold and the corresponding snapshots from the interpolated QMNG reduced model of size $n=30$ with regularization parameter $10^{-6}$. The snapshots have been computed for the test parameter $\mu_3^{(\text{test})} = 0.5$.}
    \label{fig:BurgersSnapshots}
\end{figure}

\begin{figure}
    \centering
        \begin{subfigure}[T]{\textwidth}
        \centering
        \includegraphics{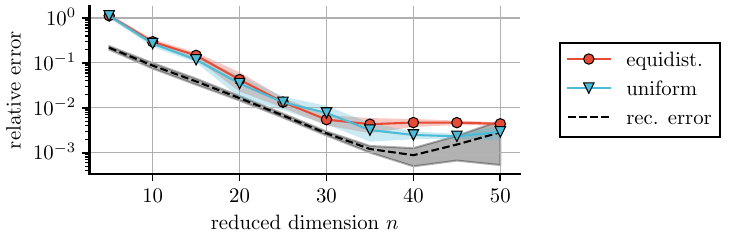}
        \caption{averaged relative error for $m = 1024$ equidistant and uniformly sampled collocation points}
    \end{subfigure}
    \hfill
    \begin{subfigure}[T]{\textwidth}
        \centering
        \includegraphics{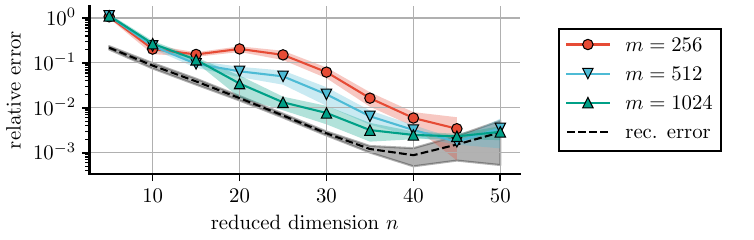}
        \caption{averaged relative error for $m \in \{256, 512, 1024\}$ uniformly sampled collocation points}
    \end{subfigure}
    \caption{Burgers' equation: Using different collocation points than the grid points in QMNG reduced models is a form of hyper-reduction. Using about $10\times$ fewer collocation points ($m = 256$) than the number of grid points of the underlying full model ($N = 2048$) still leads to an error decay that is similar to the decay of the reconstruction error on the quadratic manifold.}
    \label{fig:BurgersErrors}
\end{figure}

\section{Conclusions}\label{sec:Conc}
We applied Neural Galerkin schemes to quadratic parametrizations, which are more structured than neural-network parametrizations. In particular, the corresponding QMNG reduced models have locally unique solutions. In numerical experiments, the proposed QMNG reduced models achieve orders of magnitude speedups, including online efficiency for linear full models. A potential future research direction is to investigate how the time-varying test spaces given by Neural Galerkin schemes can be effectively utilized in non-intrusive settings, such as those considered in \cite{GeelenWW2023Operator}

\bibliographystyle{elsarticle-num}
\bibliography{bibliography, donotchange, main}

\end{document}